\def\Res{\mathop{\rm Res}}
\tikzset{
  on each segment/.style={
    decorate,
    decoration={
      show path construction,
      moveto code={},
      lineto code={
        \path [#1]
        (\tikzinputsegmentfirst) -- (\tikzinputsegmentlast);
      },
      curveto code={
        \path [#1] (\tikzinputsegmentfirst)
        .. controls
        (\tikzinputsegmentsupporta) and (\tikzinputsegmentsupportb)
        ..
        (\tikzinputsegmentlast);
      },
      closepath code={
        \path [#1]
        (\tikzinputsegmentfirst) -- (\tikzinputsegmentlast);
      },
    },
  },
  mid arrow/.style={postaction={decorate,decoration={
        markings,
        mark=at position .5 with {\arrow[#1]{stealth}}
      }}},
}
\numberwithin{equation}{section}
\newtheorem{theorem}{Theorem}[section]
\newtheorem{remark}[theorem]{Remark}
\newtheorem{prop}[theorem]{Proposition}
\def \a{\alpha}
\def\Re{\text{\upshape Re\,}}
\def\Im{\text{\upshape Im\,}}
\def\Ai{\text{\upshape Ai}}
\def\Bi{\text{\upshape Bi}}
\def\(#1\){\left(#1\right)}
\begin{document}
\title{Uniform asymptotics for discrete orthogonal  polynomials on infinite nodes with an accumulation point}
\author{Xiao-Bo Wu$^{\,1}$, Yu Lin$^{\,2,}$\thanks{Corresponding author. Email address: scyulin@scut.edu.cn} , Shuai-Xia Xu$^{\,3}$, Yu-Qiu Zhao$^{\,1}$
\\
 \hbox{\small \emph{$^1$ Department of Mathematics, Sun Yat-sen University, Guangzhou,
China}} \\
 \hbox{\small
\emph{$^2$ Department of Mathematics, South China University of Technology, Guangzhou, China}
 }
\\
 \hbox{\small
\emph{$^3$ Institut Franco-Chinois de I'Energie Nucl\'eaire, Sun Yat-sen University, Guangzhou,
China}}}












\date{}

\maketitle

\begin{abstract}
In this paper, we 
develop the Riemann-Hilbert method to study the 
asymptotics of discrete orthogonal  polynomials on infinite nodes with an accumulation point. 
To illustrate our method, we consider the Tricomi-Carlitz polynomials $f_n^{(\alpha)}(z)$ where $\alpha$ is a positive parameter.
Uniform Plancherel-Rotach type asymptotic formulas are obtained in the entire complex plane
including a neighborhood of the origin, and
our results agree with the ones obtained earlier in [{\it SIAM J.\;Math.\;Anal} {\bf 25} (1994)] and 
[{{\it Proc.\;Amer.\;Math.\;Soc.\,}{\bf138} (2010)}].  

\vspace{.4cm}

\textbf{Keywords:}\;Uniform asymptotics; Tricomi-Carlitz polynomials; Riemann-Hilbert method; Airy function.

\textbf{Mathematics Subject Classification 2010}: 41A60, 33C45
\end{abstract}


\section{Introduction}

Discrete orthogonal polynomials arise in  many fields of  mathematical physics, such as random matrix theory and quantum mechanics.
There has been a considerable amount of interest in the  asymptotic analysis of   these
  orthogonal polynomials, and various  methods have been  developed for this purpose; see \cite{Szego} and \cite{WongZhao}.

In 2007, Baik {\it et al.}\;\cite{Baik}    studied the asymptotics of discrete orthogonal polynomials with respect to a general weight function by using the Riemann-Hilbert approach. The starting point of their  investigation  is  the  interpolation problem (IP)  for    discrete orthogonal polynomials, introduced in  Borodin and Boyarchenko   \cite{Borodin}.  The IP is then turned into a Riemann-Hilbert problem (RHP), and the Deift-Zhou method for oscillating RHP   applies; see, e.g., \cite{DeiftZhouA,DeiftZhouU, DeiftZhouS}. The work of    Baik {\it et al.} furnishes  an important step forward of the method of Deift and Zhou.

Much attention has been attracted lately.  For example,  Wong  and coworkers considered  cases with finite nodes \cite{DaiWong,LinWong,LinWongC},  and infinite nodes \cite{OuWong,WangWong}  regularly distributed. A common feature in these work is termed  global asymptotics, with global referring to the   domains of uniformity.

Quite recently,
Bleher and Liechty \cite{BleherE,Bleher} made a major modification to the method in the treatment of the so-called band-saturated region endpoints, when they were
 considering  the large-$N$ asymptotics of a system of discrete orthogonal polynomials
with respect to the varying exponential weight $e^{-NV(x)}$ on the regular infinite lattice of mesh $1/N$,
where $V(x)$ is a real analytic function with sufficient growth at infinity.  Here  regular infinite lattice means that the infinite nodes are equally spaced.

In this paper, we study the asymptotics of discrete orthogonal polynomials on infinite nodes with an accumulation point. We illustrate our method by
concentrating on the Tricomi-Carlitz polynomials. It is worth noting that there are other polynomials share such a structure. For example,  there is a class of sieved Pollaczek polynomials defined by a second-order difference equation; see
 \cite{WangZhao}.  A significant fact is  that the corresponding orthogonal measure  consists of an absolutely continuous part, and a discrete part having infinite many mass points with an accumulation point.

The Tricomi-Carlitz polynomials
are also  of interest. Initially,
Tricomi \cite{Tricomi}
introduced a class of non-orthogonal polynomials
   $t_n^{(\a)}(x)$,  related to the Laguerre polynomials via  $ t_n^{(\a)}(x)=(-1)^nL_n^{(x-\alpha-n)} (x)$. They are  explicitly given by
\begin{equation}\label{t}
 t_n^{(\a)}(x)=\sum_{k=0}^{n}(-1)^k \begin{pmatrix}
 x-\a\\
 k
 \end{pmatrix}
\frac {x^{n-k}}{(n-k)!},\quad\quad n=0,1,2,\cdots;
\end{equation}cf. \cite{LeeWongU,LopezTemme}. We note that each   $t_n^{(\a)}(x)$ is of degree $[\frac n 2]$, and the polynomials satisfy
  the following  recurrence relation
\begin{equation}
	(n+1) t_{n+1}^{(\alpha)}
	(x)- (n+\alpha) t_n^{(\alpha)}(x)
	+x t_{n-1}^{(\alpha)}(x)=0,
	\qquad n\geq 1,
\end{equation}
with initial values $t_0^{(\alpha)}(x)=1$ and
$t_{1}^{(\alpha)}(x)=\alpha$.
Carlitz  \cite{Carlitz}  revisited these polynomials, and found that if one set
\begin{equation}
 f_n^{(\a)} (x)= x^n t_n^{(\a)}(x^{-2}),
\end{equation}
then $f_n^{(\a)} (x)$ possess  the following orthogonality
\begin{equation}
 	\int_{-\infty}^{\infty}  f_m^{(\alpha)} (x)
    f_n^{ (\alpha) }(x) d \psi^{(\alpha)} (x)
    =h_n \delta_{mn},\quad\quad h_n=\frac{ 2e^{\alpha} }{ (n+\alpha) n! },
\label{hn}
\end{equation}
where $\alpha$ is a positive number, $\psi^{(\alpha)}(x)$ is the step function with  jumps
\begin{equation}\label{mass}
		d\psi^{(\alpha)} (x)
        =\frac{ (k+\alpha)^{k-1} e^{-k} }{k!}
        \qquad \mbox{at } x=\pm x_k,
\end{equation}
and the nodes  $x_k=  (k+\alpha)^{-1/2}$ for $k=0,1,2,\cdots$. It is readily verified that  the Tricomi-Carlitz polynomials satisfy the recurrence relation
\begin{equation}\label{t1.6}
	(n+1) f_{n+1}^{(\alpha)}
	(x)- (n+\alpha) x f_n^{(\alpha)}(x)
	+ f_{n-1}^{(\alpha)}(x)=0,
	\qquad n\geq 1,
\end{equation}
with initial values $f_0^{(\alpha)}(x)=1$, and
$f_{1}^{(\alpha)}(x)=\alpha x$.   From \eqref{t1.6} we see that a  symmetry relation holds, namely,
\begin{equation}
f_{n}^{(\alpha)}(z)=(-1)^nf_{n}^{(\alpha)}(-z).
\label{sym}
\end{equation}Also, if
  we denote the monic polynomials by
\begin{equation}
	\pi_n(z):= f_n^{(\alpha)}(x)
	/\gamma_n,
\end{equation}
then  the leading coefficient of
$f_n^{(\alpha)}(z)$ is
\begin{equation}
	\gamma_n=\prod_{k=0}^{n-1}
\frac{k+\a}{k+1}
=\frac{\Gamma(n+\a)}{\Gamma(\a)\Gamma(n+1)}.
\label{leading}
\end{equation}
Moreover, the Tricomi-Carlitz polynomials are also related to the random walk polynomials $r_n(x,\alpha)$,
which were
discovered by Karlin and McGregor in \cite{Karlin} to the study of a birth and death process.
For more information on orthogonal polynomials, we refer to \cite{Szego} and \cite{BealsWong}.

Asymptotic behavior of these polynomials  was first investigated by Goh and Wimp in  \cite{GohWimpO} for    $f_n^{(\alpha)}(y/\sqrt \alpha)$,   and  in \cite{GohWimpT} for    $f_n^{(\alpha)}(y/\sqrt n)$.  Later,  L\'{o}pez  and  Temme  \cite{LopezTemme} took  $f_n^{(\alpha)}(x)$  as an example  to  approximate   polynomials in terms of the Hermite polynomials.
 The present paper is also inspired by the work of    Lee and Wong. In \cite{LeeWongU}, Lee and Wong
    derived an asymptotic expansion for $f_n^{(\alpha)}(t/\sqrt{\nu})$
   by using the difference equation method,
which holds  for $t$ in $[0,\infty)$, where $\nu = n +2\alpha - 1/2$. In another paper \cite{LeeWongA}, an alternative   integral method is used to  derive  the expansion, along with asymptotic formulas for  the extreme zeros.
In both treatments they obtained the uniform asymptotics,  in terms of the Airy function,  at and around the turning point $t=2$.

It is worth noting that the previous uniform results  are obtained on the real line, while an advantage of the Riemann-Hilbert approach lies in that the uniform  asymptotic approximations can be obtained in overlapping domains covering the whole complex plane.  In the present case,  the nodes are not regularly distributed, with the origin being the accumulation point. Even worse, the mass in   \eqref{mass}
also   shows a singularity at the origin. Hence, it is of interest to see the influence of these singularities on the asymptotic behavior at the origin.

The objective of this paper is to derive the uniform asymptotics of the Tricomi-Carlitz polynomials,  based only on the weight function, and using several of the techniques    developed by
Baik {\it et al.} and
Bleher and Liechty.
The rest of this paper is arranged as follows:
In Section 2, we state our main results, and introduce the basic interpolation problem.
In Section 3, we will consider the transformation, which converts the interpolation problem into an  equivalent Riemann-Hilbert problem (RHP).
In Section 4, we give the matrix transformation $V(z)$ that normalizes the RHP for $R(z)$ presented in Section 3 by using a function
$g(z)$, which
is related to the logarithmic potential of the equilibrium measure.
Several  auxiliary functions, such as the
$\phi$-functions and $D$-functions,  are also studied in Section 4. Then, we factorize
the jump matrix in the RHP for $V(z)$ and construct the global parametrix $N(z)$ {in Section 5}.
In Section 6, we study  local parametrices and the last transformation $V \to S$.
The proofs of Theorem \ref{theorem-main} and {Theorem \ref{Theorem-origin}} are presented in {Section 7 and 8}, 
and we  compare our formulas with previous results in {Section 9}.

\section{Statement of Results}

It is well-known that the zero distribution
plays an important role in the asymptotic
analysis of the polynomials;
see \cite{DeiftZhouS} and \cite{Baik}.
Note that the asymptotic zero distribution of the Tricomi-Carlitz polynomials
is the Dirac point mass at zero; see Goh and Wimp \cite{GohWimpO}.
In the present paper, we study the large-$n$  asymptotics of
the rescaled Tricomi-Carlitz polynomials $f_n^{(\alpha)}(n^{-1/2}z)$,
of which  the density function has already been
  given as
\begin{equation}
\psi(x)=
\begin{cases}
\dfrac{1}{\pi} \left( \dfrac{ 4\arctan(|x|/\sqrt{4-x^2}) }{|x|^3}
- \dfrac{\sqrt{4-x^2}}{x^2}
\right),   & \quad  |x|\leq 2, \\[.4cm]
\dfrac{2}{|x|^3},  & \quad  |x|>2;
\end{cases}
\label{psi}
\end{equation}
see \cite{GohWimpT, ArnoAssche}.

To state the asymptotic behavior of the rescaled  polynomials, we need to introduce some  notations. Let the $g$-function be the logarithmic potential defined by
\begin{equation}
 g(z):= \int_{-\infty}^{\infty} \log(z-s) \psi(s) ds
\qquad
\mbox{for }  z\in\mathbb{C}\setminus \mathbb{R},
 \label{g}
\end{equation}with  the branch   chosen such that $\arg (z-s)\in (-\pi, \pi)$,
and the so-called $\phi$-function be given by
\begin{equation}
\begin{aligned}
	\phi(z):= l/2-g(z)
		\qquad
		\mbox{for }
		z \in \mathbb{C}_\pm,
\end{aligned}
\label{phi}
\end {equation}
where $l:=2\int_{-\infty}^{\infty} \log|2-s| \psi(s) ds$ is  the Lagrange multiplier; cf.   \eqref{l} below.
Also, we introduce the auxiliary function
\begin{equation}
	\widetilde\phi(z)
	:=	\int_2^{z}\left(
		-g'(s)\mp \frac {2\pi i}{s^3}
		\right)ds
	\qquad
	\mbox{for }
	z\in C_\pm,
\label{phi-tilde}
\end {equation}
where the path of integration  lies entirely in the regions $z\in \mathbb{C} \setminus (-\infty,2]$  except for the initial point.
In {Section 6}, we will show that the function
\begin{equation}\label{f-tilde}
	\widetilde f_n(z) := \( -\frac32 n \widetilde
	\phi (z) \)^{2/3}
	 \end{equation} is  analytic   in a neighborhood of  $z=2$.  The last function   we  need is
\begin{equation}\label{D}
D(z):= \frac{\Gamma(\alpha-n/z^2)}{e^{n/z^2}\sqrt{2\pi}}\left(-\frac{n}{z^2}\right)^{n/z^2-\alpha+1/2}\qquad
\mbox{for } z\in \mathbb{C}\setminus \mathbb{R},
\end{equation}
{
where the branch is  chosen such that  $-1/z^2=e^{\pm \pi i}/z^2$ for $z\in \mathbb{C}_\pm$, with $\arg z\in (-\pi, \pi)$.}

Now we are ready to present our main results.
In view of the symmetries \eqref{sym} and $f^{(\alpha)}_n(z)=\overline {f^{(\alpha)}_n(\overline{z})}$, we only need to present the  asymptotic formulas for $\pi_{n}(n^{-1/2} z)$    in the first quadrant of the complex plane. The asymptotic formulas are stated
in the five closed regions $A_{\delta}$, $B_{\delta}$,
$C_{\delta,1}$, $C_{\delta,2}$ and $D_\delta$  depicted in Figure \ref{fig-results}.

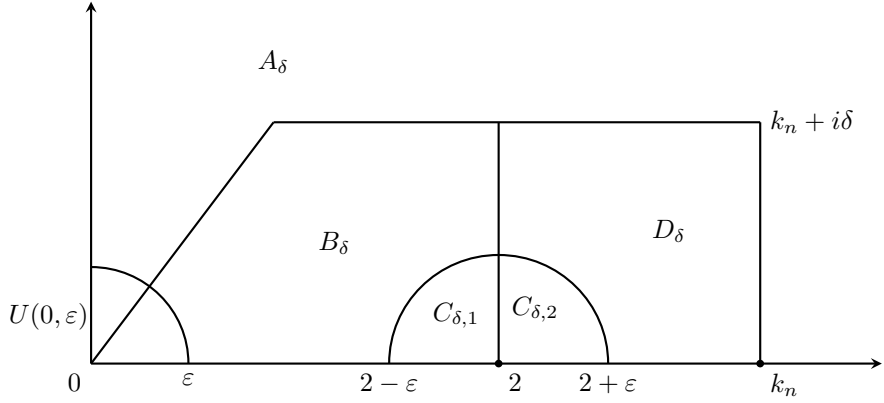
\begin{figure}[t]
\centering
\begin{tikzpicture}[auto,scale=.8,thick]

   \draw [->,>= stealth](-5,0) --(8,0);

   \fill (3.5-1.8,0) circle (1.8pt);

   \fill (6,0) circle (1.8pt);

      \draw [->,>= stealth](-5,0) --(-5,6);

   \draw (3.5,0)  arc (0:180:1.8) ;

   \draw  (-5+1.6,0) arc (0:90:1.6);

   \node [below] at (3.5,0) {$2+\varepsilon$} ;

   \node [below] at (3.5-3.6,0) {$2-\varepsilon$} ;

   \node [below] at (-5+1.6,0) {$\varepsilon$} ;

   \node [above] at (-5-.7,.4) {$U(0,\varepsilon)$} ;

   \draw  (3.5-1.8,0)--(3.5-1.8,4);

   \draw (-5,0)--(-2,4);

  \draw(-2,4)--(6,4);

  \draw   (6,4)--(6,0);

  \node [below left] at (-5,0) {$0$};

  \node [below right] at (6,0) {$k_n$};

    \node [right] at (6,4) {$k_n+i\delta$};
  
  \node [below right] at (3.5-1.8,0) {$2$};

  \node at  (-2,5) {$A_\delta$} ;

  \node at (-1,2) {$B_\delta$};

  \node at (4.5,2.2) {$D_\delta$};

  \node at (1,.8) {$C_{\delta,1}$};
  \node at (2.3,.9) {$C_{\delta,2}$};

\end{tikzpicture}
\caption{Asymptotic regions for $\pi_n(n^{-1/2} z)$ in the first quadrant.}
\label{fig-results}
\end{figure}

\begin{theorem}\label{theorem-main}
Let $\alpha>0$ and $k_n=\sqrt{n/\alpha}+\delta$. Then there exists $\delta_0>0$ such that for all $0<\varepsilon<\delta\leq \delta_0$,  the following holds (see Figure \ref{fig-results}):
\begin{enumerate}[(a)]
\item For $z$ in the outside region $A_\delta\setminus
    U(0,\varepsilon)	$,  where $U(0,\varepsilon)$ is the disk of radius $\varepsilon$, centered at the origin: 
\begin{equation}
\begin{aligned}
\pi_n (n^{-1/2}z)&=\frac{\Gamma(\a)e^{n/2}}
{\sqrt{2\pi}n^{n/2+\alpha-1/2}}D^{-1}(z)(z^2-4)^{-1/4}
\\
&\quad \times \(\frac{z+\sqrt{z^2-4}}{2}\)^{2\a-1/2}e^{-n\phi(z)-\a\pi i+\pi i/2}
\left (1+O(1/n)\right ).
\label{A_delta}
\end{aligned}
\end{equation}

\item For $z$ in the region $B_\delta\setminus
    U(0,\varepsilon)	$:
\begin{equation}
\begin{aligned}
&\pi_n (n^{-1/2}z)=\frac{\Gamma(\a)e^{n/2}}
{\sqrt{2\pi}
n^{n/2+\alpha-1/2}
}(z^2-4)^{-1/4}
\\
&\quad\times\Bigg\{\left[\(\frac{z+\sqrt{z^2-4}}{2}\)^{2\a-1/2}e^{-n\phi(z)-\alpha\pi i+\pi i/2}+\(\frac{z-\sqrt{z^2-4}}{2}\)^{2\a-1/2}e^{n\phi(z)+\alpha\pi i}\right]
\left( 1+O(1 /n)\right )
\\
&\qquad\quad+
O(e^{-n \Re \phi}/n)
\Bigg\}.
\label{B_delta}
\end{aligned}
\end{equation}

\item For z in the Airy region $C_{\delta,1} \cup C_{\delta,2}$:
\begin{equation}\label{C_delta}
\begin{aligned}
&\pi_n (n^{-1/2}z)=\frac{\Gamma(\a)e^{n/2}}
{\sqrt{2 }n^{n/2+\alpha -1/2}}
\left[\mathbf{A}(z,n)\left(1+O\left( {1}/{n}\right)\right)+\mathbf{B}(z,n)\left(1+O\left({1}/{n}\right)\right)\right],
\end{aligned}
\end{equation}
where
\begin{equation}\label{A}
\begin{aligned}
\mathbf{A}(z,n)&=\left[
\left(\frac{z+\sqrt{z^2-4}}{2}\right)^{2\alpha-1/2}
-\left(\frac{z-\sqrt{z^2-4}}{2}\right)^{2\alpha-1/2}
\right](z^2-4)^{-1/4}(\widetilde f_n(z))^{-1/4}
\\
&~~~\times\left[\Ai'(\widetilde f_n(z))\cos(\alpha\pi-n\pi/z^2)+\Bi'(\widetilde f_n(z))\sin(\alpha\pi-n\pi/z^2)\right]
\end{aligned}
\end{equation}
and
\begin{equation}\label{B}
\begin{aligned}
\mathbf{B}(z,n)&=\left[
\left(\frac{z+\sqrt{z^2-4}}{2}\right)^{2\alpha-1/2}
+\left(\frac{z-\sqrt{z^2-4}}{2}\right)^{2\alpha-1/2}
\right](z^2-4)^{-1/4}(\widetilde f_n(z))^{1/4}\\
&~~~\times \left[\Ai(\widetilde f_n(z))\cos(\alpha\pi-n\pi/z^2)+\Bi(\widetilde f_n(z))\sin(\alpha\pi-n\pi/z^2)\right].
\end{aligned}
\end{equation}


\item For z in the region $D_\delta$:
\begin{equation}
\begin{aligned}
&\pi_n (n^{-1/2}z)=\frac{\Gamma(\a)e^{n/2}}
{\sqrt{2\pi}n^{n/2+\alpha-1/2}}\\
&\quad\times \left[D^{-1}(z)(z^2-4)^{-1/4}
\(\frac{z+\sqrt{z^2-4}}{2}\)^{2\a-1/2}
e^{-n\phi(z)-\a\pi i+\pi i/2}\(1+O\left( 1/{n}\right)\)
+O\(e^{n\Re \phi}\)
\right].
\label{D_delta}
\end{aligned}
\end{equation}



\end{enumerate}
\label{uniform expansion}
\end{theorem}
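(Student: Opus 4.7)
The plan is to apply the Deift--Zhou nonlinear steepest descent method to the discrete orthogonal polynomial interpolation problem of Baik \emph{et al.}\ \cite{Baik} / Borodin--Boyarchenko \cite{Borodin}, adapted to the infinite lattice $\{\pm x_k\}_{k\ge 0}$ with accumulation at the origin. First I would set up the IP for the monic polynomial $\pi_n$, then convert the poles at the nodes $\pm x_k$ into contour jumps. The crucial device is an interpolating function with simple poles at the rescaled nodes $\pm n^{1/2}x_k$ whose residues encode the masses in \eqref{mass}. Since $x_k^{-2}=k+\alpha$, the Gamma function $\Gamma\!\left(\alpha-n/z^2\right)$ has simple poles exactly at $z=\pm n^{1/2}x_k$ and, after Stirling's formula, produces precisely the function $D(z)$ of \eqref{D}. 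This explains in advance why $D(z)$ appears throughout the final formulas: it is bookkeeping for the residue data of the infinitely many accumulating nodes.

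\textbf{Normalization and lens opening.} With the IP recast as an RHP for a matrix $R(z)$, I would next apply a $g$-function transformation based on \eqref{g}--\eqref{phi} to normalize at infinity, so that the exponential part of every jump becomes $e^{\pm 2n\phi(z)}$. The density \eqref{psi} is the relevant equilibrium measure, with band support $[-2,2]$. Factoring the jump on $(-2,2)$ into upper- and lower-triangular pieces and deforming the middle factor onto two lens contours produces an RHP whose jumps are exponentially close to the identity away from the turning points $\pm 2$, the origin, and the band-saturated endpoints $\pm k_n$.

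\textbf{Parametrices.} I would then construct four model problems. An outer (global) parametrix $N(z)$ built from Szeg\H{o} functions explains the universal factors $\left((z+\sqrt{z^2-4})/2\right)^{2\alpha-1/2}$ and the $(z^2-4)^{-1/4}$ endpoint singularity that appear in \eqref{A_delta}--\eqref{D_delta}. A local Airy parametrix in fixed disks around $z=\pm 2$ uses the conformal coordinate $\widetilde f_n(z)$ of \eqref{f-tilde} and yields the $\Ai$, $\Ai'$, $\Bi$, $\Bi'$ combinations in \eqref{A}--\eqref{B}. A local parametrix in a small disk $U(0,\varepsilon)$ must absorb the simultaneous accumulation of nodes \emph{and} the mass singularity of \eqref{mass}; its construction is driven by the Gamma/Stirling structure of $D(z)$, and the matching of this model to the outer parametrix forces the $D^{-1}(z)$ factor seen in \eqref{A_delta} and \eqref{D_delta}. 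Finally, following Bleher and Liechty \cite{BleherE,Bleher}, a local parametrix at the band-saturated endpoints $\pm k_n=\pm(\sqrt{n/\alpha}+\delta)$ must be added to cover $D_\delta$, i.e.\ the strip just beyond the outermost rescaled node $\sqrt{n/\alpha}$.

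\textbf{Extraction and main obstacle.} After matching on disk boundaries, the last transformation $V\to S$ should yield a small-norm RHP with $S(z)=I+O(1/n)$ uniformly, and unwinding the chain $\pi_n(n^{-1/2}z)\leftarrow R(z)\leftarrow V(z)\leftarrow S(z)$ would then read off the asymptotic in each region: in $A_\delta\setminus U(0,\varepsilon)$ only the outer parametrix contributes, giving \eqref{A_delta}; on the band part $B_\delta\setminus U(0,\varepsilon)$ both sheets of the Szeg\H{o} factor contribute and produce the oscillatory sum in \eqref{B_delta}; in the Airy disks $C_{\delta,1}\cup C_{\delta,2}$ the Airy parametrix dominates and yields \eqref{C_delta}; in $D_\delta$ the band-saturated parametrix contributes the extra term in \eqref{D_delta}. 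The hardest step is unquestionably the construction and matching of the parametrix at the origin: because the nodes and the masses concentrate there and the weight \eqref{mass} is itself singular, the usual local models do not apply, and one must reconcile the Stirling expansion of $D(z)$ with the local jump structure to obtain a uniform $O(1/n)$ error down to a fixed disk $U(0,\varepsilon)$. A secondary technical hurdle is the band-saturated endpoint model at $\pm k_n$, which must glue compatibly with the outer parametrix on the $B_\delta$ side and with the pointwise residue data on the $D_\delta$ side.
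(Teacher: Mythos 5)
Your overall Deift--Zhou framework (IP $\to$ RHP, $g$-function normalization, lens opening, outer Szeg\H{o} parametrix, Airy disks at $\pm2$, small-norm argument, and unwrapping) is the right skeleton, and your observation that $\Gamma(\alpha-n/z^2)$ encodes the residue data at the accumulating nodes is exactly the motivating idea behind $D(z)$. However, the role you assign to $D(z)$ and the way you propose to handle the band-saturated endpoints do not match the paper and, as stated, would leave a gap.

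\textbf{The $D$-functions are a global conjugation, not a local parametrix.} You propose ``a local parametrix at the band-saturated endpoints $\pm k_n$'' in the Bleher--Liechty spirit. The paper instead introduces the functions $D,\widetilde D,\widehat D$ (built from $\Gamma$ and Stirling, \eqref{D}, \eqref{D-tilde}, \eqref{D-hat}) together with $E,\widetilde E,\widehat E$ and applies them as a global \emph{conjugation} in the transformation $Q\to V$, \eqref{V}. This is needed for two things that a local parametrix at $\pm k_n$ alone cannot deliver: (i) beyond $\pm k_n$ the jump for $T$ is the purely oscillatory diagonal matrix $e^{\pm 2n\pi i/z^2\,\sigma_3}$ on $(\pm k_n,\pm\infty)$ (see \eqref{J_T}), which does not decay and can only be cancelled by conjugating with $D(z)^{\sigma_3}$ throughout $\Omega_\infty$; (ii) $D(z)$ also controls the $n^{1/2-\alpha}(-z^2)^{\alpha-1/2}$ growth needed to renormalize $V$ at infinity. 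In particular, the $D^{-1}(z)$ visible already in \eqref{A_delta} (the region \emph{away} from $\pm k_n$ and away from the origin) shows that $D$ is not a local object at $\pm k_n$ or at $0$; it enters uniformly via \eqref{V}. Your proposed gluing of a $\pm k_n$ model ``with the outer parametrix on the $B_\delta$ side'' also misidentifies the geometry: $\pm k_n$ bounds $D_\delta$ on the outside; the interface between $B_\delta$ and $D_\delta$ is the Airy point $z=\pm2$, not $\pm k_n$.

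\textbf{The origin parametrix is simpler than you expect.} You suggest reconciling Stirling for $D(z)$ with the local jump structure to build an intricate model at $0$. In the paper, once the $D$-conjugation has been performed the jumps near the origin reduce to the same triangular/twist forms as on the bulk of $(-2,2)$, up to $O(z^2/n)$ corrections (\eqref{V-0-jump-error}). The local solution $V_{loc}^0$ is then simply $N(z)$ times explicit lower-triangular factors (\eqref{V-loc-0}); no special function is needed there. The hard analytic issue at the origin is instead the uniform control of the error in $V_0(z)=V_{loc}^0(z)(I+O(1/n))$ via a singular-integral/contraction argument on $\partial U(0,\varepsilon)\cup(\Sigma\cap U(0,\varepsilon))$, not the construction of a new model problem.

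So: keep the outer, Airy, and origin parametrices, but replace your local $\pm k_n$ model with the global $D$/$E$ conjugation applied in $Q\to V$; otherwise the oscillatory jump on $(k_n,\infty)$ and the normalization at infinity are left unresolved, and the $D^{-1}(z)$ factor in \eqref{A_delta} and \eqref{D_delta} cannot be produced.
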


\begin{figure}
\centering
\begin{tikzpicture}
\def\rad{1.8cm}
  \path [thick,draw=black]
  (140:1.4) --(140:1.9*\rad) 
  (-40:1.4)--(-40:1.9*\rad) 
  (-140:1.9*\rad)
  --(-140:1.4)
  (40:1.9*\rad) 
  --(40:1.4)
  (-1.7*\rad,0)--(0,0)
  (0,0)--(1.9*\rad,0)
  ;

\filldraw  (0,0) circle (1pt) node [below] {$0$};

\draw [dashed] (0,0) circle (1.4cm);

\end{tikzpicture}

\caption{The contour near $z=0$.}
\label{figure-0-2}
\end{figure}

 In the disk $U(0, \varepsilon)$ of radius $ \varepsilon>0$, centered at the origin, we have the following 
  uniform asymptotic approximation:
     \begin{theorem}\label{Theorem-origin}
For $z\in U(0, \varepsilon)\cap \mathbb{C}_+$; see Figure \ref{figure-0-2},  it holds
 \begin{equation}\label{asy-origin}
\begin{aligned}
\pi_n (n^{-1/2}z)=&\frac {\Gamma(\alpha)}   {\sqrt{2\pi} }    n^{1/2-n/2-\alpha}   e^{n/2}  (4-z^2)^{-1/4}  \\
& \times \left [ e^{i\pi(1/4-\alpha)-n\phi(z)} \varphi(z/2)^{2\alpha-1/2} 
 \left(1+O(1/n)\right )
+e^{-i\pi(1/4-\alpha)+n\phi(z)} \varphi(z/2)^{-2\alpha+1/2} 
 \left(1+O(1/n)\right )
\right],
\end{aligned}
\end{equation}
  where branches are chosen such that $\arg (z+2)$, $\arg (2-z)$
and $\varphi(z)=z+\sqrt{z^2-1}$ is a analytic function in $\mathbb{C}\setminus [-1, 1]$ and behaves like $2z$ at infinity; see \eqref{varphi}.  The formula for $z\in U(0, \varepsilon)\cap \mathbb{C}_-$
is obtained from \eqref{asy-origin}    by taking complex conjugate.
\end{theorem}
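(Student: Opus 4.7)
\textbf{Proof Proposal for Theorem \ref{Theorem-origin}:}

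The plan is to introduce an additional local parametrix $P^{(0)}(z)$ in the disk $U(0,\varepsilon)$ that accounts simultaneously for the accumulation of the rescaled nodes $\sqrt{n/(k+\alpha)}$ at the origin and for the singular behavior of the mass $d\psi^{(\alpha)}$ near $0$, and then to invert the chain of transformations from Sections 3--6 restricted to this disk. Because the origin lies in the bulk of the equilibrium support, where $\psi(x)$ is smooth and strictly positive, no turning-point (Airy-type) model is needed here; the nontrivial content comes entirely from the accumulating poles and the associated analytic continuation of the weight.

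The central observation I would exploit is that $D(z)$, defined in \eqref{D}, has poles exactly at $z = \pm\sqrt{n/(k+\alpha)}$ for $k=0,1,2,\ldots$, i.e.\ precisely at the rescaled node positions of $\pi_n(n^{-1/2}z)$. Moreover, after Stirling's expansion of $\Gamma(\alpha-n/z^2)$, the residues at these poles reproduce the discrete masses $\frac{(k+\alpha)^{k-1}e^{-k}}{k!}$ appearing in \eqref{mass}. Building on this, I would construct $P^{(0)}(z)$ in a factorized form such as $P^{(0)}(z) = E(z)\,N(z)\,D(z)^{-\sigma_3}$, where $E(z)$ is holomorphic in $U(0,\varepsilon)$ and is chosen so that $P^{(0)}$ has precisely the same jumps as the transformed matrix $V(z)$ (or the current lens object) inside the disk. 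The matching condition $P^{(0)}(z) N(z)^{-1} = I + O(1/n)$ on $\partial U(0,\varepsilon)$ would then follow from Stirling's formula applied uniformly for $|z|=\varepsilon$, since there $n/z^2$ is large in modulus and bounded away from the positive real axis.

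With $P^{(0)}$ in hand, the standard small-norm argument gives $S(z) = I + O(1/n)$ uniformly in $U(0,\varepsilon)$, where $S$ denotes the ratio of the fully transformed matrix with the global and local parametrices. Unwinding the transformations $Y \to R \to V \to S$ restricted to $z \in U(0,\varepsilon)\cap\mathbb{C}_+$ expresses $\pi_n(n^{-1/2}z)$ as an explicit combination of $N_{11}(z)$ and $N_{12}(z)$ multiplied by the $g$-function exponentials $e^{\pm n\phi(z)}$, the factor $D(z)$, and the appropriate branch cuts. Using the Szeg\H{o}-type form of $N(z)$, the identity $\varphi(z/2) = (z+\sqrt{z^2-4})/2$ converts the algebraic factors of $N(z)$ into $\varphi(z/2)^{\pm(2\alpha-1/2)}$, while replacing the branch $(z^2-4)^{-1/4}$ by $(4-z^2)^{-1/4}$ absorbs an $e^{i\pi/4}$ factor and, combined with the $e^{-i\alpha\pi}$ coming from $D(z)^{-1}$, produces the phase $e^{i\pi(1/4-\alpha)}$ in \eqref{asy-origin}.

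The main obstacle is the construction of $P^{(0)}(z)$ itself: the RHP inside $U(0,\varepsilon)$ is nonstandard because it features infinitely many poles accumulating at $0$, and one must verify that inserting the $D(z)^{-\sigma_3}$ factor genuinely removes these singularities (rather than shifting them) and that the resulting $E(z)$ is analytic with a controlled norm. This hinges on the delicate interplay between the Stirling expansion of $\Gamma(\alpha - n/z^2)$ for large $|n/z^2|$ and the lattice-sum representation of the discrete weight; once it is verified, the remainder of the proof is a routine, if lengthy, unwinding of the transformations already established in Sections 3--6.
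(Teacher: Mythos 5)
Your overall strategy---erect a local parametrix in $U(0,\varepsilon)$, verify a matching condition with $N(z)$ on $\partial U(0,\varepsilon)$, run a small-norm argument, and then unwind the chain of transformations---is the right template and is what the paper does. But the way you envision the local parametrix reflects a misreading of where the difficulties live, and the proposed construction $P^{(0)}(z)=E(z)\,N(z)\,D(z)^{-\sigma_3}$ would not be the correct object at this stage of the analysis.

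The key point you miss is that by the time one reaches the RHP for $V(z)$ there are \emph{no poles at all} in $U(0,\varepsilon)$. The accumulating poles at the rescaled nodes $\pm X_k$ are eliminated already in the transformation $U\to R$ (Proposition 3.1), which turns residue conditions at $\pm X_k$ into jump conditions across the lens boundaries; the singularities are gone, not merely shifted. Likewise, the $D$-functions are inserted in the transformation $Q\to V$ in Section 5 with the (different) purpose of taming the jump matrices in the saturated region and normalizing behavior at infinity, not of cancelling poles near the origin. Consequently, the RHP for $V$ inside $U(0,\varepsilon)$ is entirely standard: there are jump lines with explicit, pole-free, uniformly bounded jump matrices. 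Tacking yet another $D(z)^{-\sigma_3}$ onto a putative parametrix would partially undo the $Q\to V$ step rather than solve a new problem. Your worry in the final paragraph---``infinitely many poles accumulating at $0$'' in the local RHP---addresses a problem the paper has already disposed of two transformations earlier.

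What the paper actually uses at the origin is much lighter. Because $\psi$ is positive and smooth near $0$, the function $V(z)$ satisfies a jump on $\Sigma\cap U(0,\varepsilon)$ whose entries $-w_0/\widehat D^2$, $w_0/\widetilde D^2$ converge uniformly to $-(4-z^2)^{2\alpha-1}e^{L}$ with an error $O(z^2/n)$. The parametrix $V_{\mathrm{loc}}^0$ is then simply $N(z)$ multiplied on the right by an explicit analytic triangular factor (equation \eqref{V-loc-0}), i.e.\ it is the solution of the RHP with these limiting jumps. The estimate $J_{V_{\mathrm{loc}}^0}(z)J_V^{-1}(z)=I+O(z^2/n)$, uniformly on $\Sigma\cap U(0,\varepsilon)$ and on $\partial U(0,\varepsilon)$, is the decisive fact, and the small-norm/single-layer-potential argument (the Remark after \eqref{V-0-V-loc}) handles the residual $O(1/n)$. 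After that, the proof of Theorem \ref{Theorem-origin} is exactly the unwinding you describe in your third paragraph; that part of your proposal is correct. If you want to rescue your proposal, drop the pole-cancellation narrative around $D(z)^{-\sigma_3}$, recognize that the nontrivial auxiliary functions have already been absorbed into $V(z)$, and replace your $P^{(0)}$ by $N(z)$ itself dressed with the explicit $e^{2n\phi}$ lower-triangular correction.
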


To derive the main results, we begin with the  basic IP
 for the Tricomi-Carlitz polynomials.
 Following \cite{Baik}, one can formulate  the IP
 for a $2\times2$ matrix-value function $Y(z)$ with the properties:
\begin{description}
  \item[($Y_1$)]     $Y(z)$ is analytic in
  $\mathbb{C}\backslash\{0,  \pm x_{0},  \pm x_{1}, \cdots,  \pm x_{k}, \cdots \}$;

\item[($Y_2$)]    at each  $\pm x_k$, $k=0,1,\cdots$,
$Y(z)$ has a simple pole, and satisfies
\begin{equation}\label{Y_2}
 \Res_{z=\pm x_{ k}}Y(z)=\lim_{z\rightarrow \pm x_{ k}}Y(z) \left(
                               \begin{array}{cc}
                                 0&  w_d(z) \\
                                 0 & 0 \\
                                 \end{array}
                             \right)
,
\end{equation}
where
\begin{equation}
w_d(z)=\frac{({1}/{z^2})^{{1}/{z^2}-1-\alpha}~e^{-{1}/{z^2}+\alpha}}
{\Gamma({1}/{z^2}+1-\alpha)}
\label{w_d}
\end{equation}
and the branch is chosen such that 
$w_d(z)$ is analytic in $\mathbb{C}\setminus i \mathbb{R}$ and takes positive values for
$\mathbb{R}\setminus \{0\}$;
\item[($Y_3$)]    as $z \to \infty$,
  \begin{equation}\label{Y_3}
  Y(z)=\left (I+O\left (1 /z\right )\right )\left(
                               \begin{array}{cc}
                                 z^n & 0 \\
                                 0 & z^{-n} \\
                               \end{array}
                             \right);
\end{equation}
\item[($Y_4$)]
$Y(z)$ has the following behavior
as $z\to 0$,
 \begin{equation}\label{Y_4}
Y(z)=O
                               \begin{pmatrix}
                                 1 & \log |z| \\
                                 1 & \log |z| \\
                               \end{pmatrix},~~~\qquad z\not\in[-1/\sqrt{\alpha},1/\sqrt{\alpha}\,].\end{equation}
\end{description}

By the well-known theorem of Fokas, Its and Kitaev \cite{Fokas},
we have
\begin{theorem}\label{Theorem-Y}
 The unique solution of the IP for $Y$ is given by
\begin{equation}
  Y(z)=\left(
  \begin{array}{cc}
  \pi_n(z) & \displaystyle\sum_{k=0}^{\infty}
  	\frac{\pi_n(x_k)w_d(x_k)}{z-x_k}+\displaystyle\sum_{k=0}^{\infty}\frac{\pi_n(-x_k)w_d(-x_k)}{z+x_k}
  \\\\
  \gamma_{n-1}^2h_{n-1}^{-1}\pi_{n-1}(z) & \gamma_{n-1}^2h_{n-1}^{-1} \left(\displaystyle\sum_{k=0}^{\infty}
  	\frac{\pi_{n-1}(x_k)w_d(x_k)}{z-x_k}+\displaystyle\sum_{k=0}^{\infty}\frac{\pi_{n-1}(-x_k)w_d(-x_k)}{z+x_k}
  \right)
  \\
                                 \end{array}
                             \right),
\label{Y}
\end{equation}
where $\pi_n(z)$ is the monic Tricomi-Carlitz polynomials of degree $n$,  $\gamma_n$ and $h_n$
are defined in \eqref{leading} and \eqref{hn}, respectively.
\end{theorem}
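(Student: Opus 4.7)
The plan is to follow the standard Fokas--Its--Kitaev recipe~\cite{Fokas}: verify that the explicit matrix on the right-hand side of \eqref{Y} satisfies each of the four conditions ($Y_1$)--($Y_4$), and then deduce uniqueness by a Liouville-type argument. The only genuinely non-classical feature, compared with the case of finitely many or regularly spaced nodes, is the need to control the infinite series in the second column near the accumulation point $z=0$.

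First I would establish convergence together with ($Y_1$) and ($Y_2$). A direct computation using $w_d(x_k)=(k+\alpha)^{k-1}e^{-k}/k!$ combined with Stirling's formula gives $w_d(x_k)=O(k^{-3/2})$ as $k\to\infty$, while $\pi_n(x_k)$ stays bounded because $x_k\to 0$. Hence both series in \eqref{Y} converge absolutely and locally uniformly on $\mathbb{C}\setminus\{0,\pm x_0,\pm x_1,\dots\}$, which gives ($Y_1$). At any fixed $\pm x_k$ only a single term is singular, and the evenness of $w_d$ in $z$, together with $Y_{11}(x_k)=\pi_n(x_k)$ and $Y_{21}(x_k)=\gamma_{n-1}^2 h_{n-1}^{-1}\pi_{n-1}(x_k)$, forces the residue to agree with the right-hand side of \eqref{Y_2}. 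For ($Y_3$) I would expand $1/(z\mp x_k)=\sum_{j\ge 0}(\pm x_k)^j/z^{j+1}$ and swap the order of summation (justified by the decay just noted) to recognise the coefficient of $z^{-j-1}$ in the combined sum as $\int s^j\pi_n(s)\,d\psi^{(\alpha)}(s)$, using the identity $w_d(\pm x_k)=d\psi^{(\alpha)}(\pm x_k)$ read off from \eqref{mass}. The orthogonality relation \eqref{hn} annihilates the terms with $j<n$, leaving $Y_{12}(z)=(h_n/\gamma_n^2)z^{-n-1}+O(z^{-n-2})$; the analogous calculation with $\pi_{n-1}$ in place of $\pi_n$, together with $\pi_n(z)=z^n(1+O(1/z))$ and $\pi_{n-1}(z)=O(z^{n-1})$, reproduces the prescription \eqref{Y_3}.

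The hard part will be condition \eqref{Y_4}, because the accumulation of nodes at the origin blocks any naive termwise bound. For $z$ off the real segment $[-1/\sqrt{\alpha},1/\sqrt{\alpha}]$ with $|z|$ small, I would change variables by $t=(k+\alpha)^{-1/2}$ and compare the Riemann--Stieltjes sum to a continuous integral: the spacing satisfies $x_k-x_{k+1}\sim x_k^3/2$ near $0$ and $w_d(x_k)\sim (e^\alpha/\sqrt{2\pi})(k+\alpha)^{-3/2}$, so the effective density $w_d(x_k)/(x_k-x_{k+1})$ stays bounded in a neighbourhood of the origin. The full sum is therefore dominated by a constant times $\int_0^{1/\sqrt{\alpha}}|z-t|^{-1}\,dt$, which produces exactly the logarithmic growth required by \eqref{Y_4}; the first-column entries are polynomials, hence bounded near $0$.

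Uniqueness is then standard. The nilpotent shape of the residue matrix in \eqref{Y_2} forces the singular parts of the two columns of $Y$ to be parallel at each $x_k$, so $\det Y(z)$ is entire on $\mathbb{C}\setminus\{0\}$, is at worst logarithmic at the origin (hence a removable singularity by \eqref{Y_4} applied to both columns), and tends to $1$ at infinity by \eqref{Y_3}; thus $\det Y\equiv 1$ by Liouville's theorem. For any second solution $\widetilde Y$, the matrix $M(z)=\widetilde Y(z)Y(z)^{-1}$ has no jumps or poles, grows at most logarithmically at $0$, and tends to $I$ at infinity, so $M\equiv I$ and the solution is unique.
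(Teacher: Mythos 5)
The paper gives no proof of this theorem: after stating ($Y_1$)--($Y_4$) it simply appeals to the Fokas--Its--Kitaev theorem together with the discrete interpolation-problem framework of Borodin--Boyarchenko and Baik \emph{et al.} Your blind proof therefore takes a genuinely different route, namely a direct verification of ($Y_1$)--($Y_4$) for the explicit matrix followed by a Liouville argument for uniqueness, and in doing so it isolates the two features that make this case non-classical: the decay rate of the masses and the accumulation of nodes at the origin. The verification is sound in its essentials. Your Stirling estimate $w_d(x_k)\sim e^{\alpha}(2\pi)^{-1/2}(k+\alpha)^{-3/2}$ is correct and yields the locally uniform absolute convergence behind ($Y_1$) and ($Y_2$); the moment expansion plus orthogonality gives $Y_{12}(z)=(h_n/\gamma_n^2)z^{-n-1}+O(z^{-n-2})$ and $Y_{22}(z)=z^{-n}(1+O(1/z))$, so ($Y_3$) holds; and the nilpotent residue structure makes $\det Y$ analytic across every node, so the origin is its only possible isolated singularity and the Liouville step for uniqueness is legitimate. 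The one place to tighten is ($Y_4$): the Riemann-sum comparison $\sum_k w_d(x_k)|z-x_k|^{-1}=O\left(\int_0^{1/\sqrt{\alpha}}|z-t|^{-1}\,dt\right)=O(\log(1/|z|))$ is valid when $z\to 0$ with $\arg z$ bounded away from $0$ and $\pi$; for tangential approach the right-hand side degrades to $O(\log(1/|\Im z|))$. The paper's statement of ($Y_4$) is silent about the geometry of the limit, but all subsequent transformations evaluate the RHP on fixed contours staying a positive angle off the real axis near the origin, so the sectorial reading is the operative one and should be made explicit in your write-up. What the paper's citation buys is brevity; what your approach buys is an honest account of why the infinite accumulating-node formulation still yields a well-posed problem.
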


\section{Riemann-Hilbert problem}

In this section, we will  make a sequence of transformations
$
	Y\to U\to R,
$
to convert  the basic IP  into a RHP.
The first transformation   is the    following rescaling of variable
\begin{equation}
  U(z):=n^{{n\sigma_3}/{2}}Y(n^{-1/2}z),
\label{Y to U}
\end{equation}
where $\sigma_3=\begin{pmatrix}
1 & 0\\
0 & -1
\end{pmatrix}$
is the Pauli matrix. Let $X$ denote the set defined by
\begin{equation}
 X:=\{\pm X_{ k}\}_{k=0}^{\infty},
 \quad \mbox{where }  X_{ k}= n^{1/2} \,x_{k}.
\end{equation}
The $\pm X_{ k}$'s are called {\it nodes}, and they all lie in the interval $(-\sqrt{n /\alpha}, \sqrt{ n /\alpha})$. It is readily seen that at each {\it node} $\pm X_k$
\begin{equation}\label{U-residue}
	\Res_{z=\pm X_k}
	U(z)
	=\lim_{z\to \pm X_k}
	U(z)
	\begin{pmatrix}
	0  &  w(z) \\
	0  &  0
\end{pmatrix},
\end{equation}
where
\begin{equation}\label{w}
	w(z):= n^{1/2} w_d(n^{-1/2} z),
\end{equation}
and as $z\to \infty$,
  \begin{equation}
  U(z)=\left (I+O\left (1 /z\right )\right )\left(
                               \begin{array}{cc}
                                 z^n & 0 \\
                                 0 & z^{-n} \\
                               \end{array}
                             \right).
\end{equation}

Define
\begin{equation}\label{Pi}
	\Pi(z):=
	{  \sin  \theta(z) }/
	{ \gamma(z) },	
\end{equation}
where
\begin{equation}
	\theta(z):=  n\pi/z^2 - \pi\alpha
\qquad	
\mbox{and}
\qquad
	\gamma(z):=-2n\pi/z^3.
\end{equation}
Note that for each $X_k\in X$
\begin{equation}\label{Pi-prop}
	\Pi(\pm X_k)=0
	\qquad \mbox{and}
	\qquad
	 \dfrac{[\sin\theta(
	 \pm X_k)]'}{\gamma(
	 \pm X_k)}=(-1)^k.
\end{equation}
Moreover, we introduce the upper triangular matrices
\begin{equation}\label{u}
	\mathcal{U}_\pm(z):=
	\begin{pmatrix}
	1  &   -\frac{w(z)}
	{\Pi (z)   }  e^{ \pm i \theta(z)}
	\\
	  0   &  1
\end{pmatrix},
\end{equation}
and the lower triangular matrices
\begin{equation}\label{u-tria}
	\mathcal{U}_\pm^{\triangle}(z):=
\Pi(z)^{-\sigma_3}
\begin{pmatrix}
	1  &  0
	\\
	   -\frac{1 }
	{\Pi (z) w(z) }  e^{ \pm i\theta(z)  }
   &  1
\end{pmatrix}
.
\end{equation}

\begin{figure}
\centering
\begin{tikzpicture}
\def\rad{1.8cm}
  \path [thick,draw=black,postaction={on each segment={mid arrow=black}}]
  (-3*\rad,0)--(-1.7*\rad,0)
  (-1.7*\rad,0)--(0,0)
  (0,0)--(1.7*\rad,0)
  (1.7*\rad,0)--(3*\rad,0)

  (-3*\rad,0)--(-3*\rad,\rad) node [above right] {$\quad \Sigma_{l,+}^\triangle$}
  (-3*\rad,\rad)--(-1.7*\rad,\rad)
  (-1.7*\rad,\rad)--(-\rad,\rad)
  (-1.7*\rad,\rad) --(-1.7*\rad,0)
  (-\rad,\rad)node [above] {$\Sigma_{l,+}$} --(0,0)

  (-3*\rad,0)--(-3*\rad,-\rad) node [below right] {$\quad \Sigma_{l,-}^\triangle$}
  (-3*\rad,-\rad)--(-1.7*\rad,-\rad)
  (-1.7*\rad,-\rad)--(-\rad,-\rad)
  (-1.7*\rad,-\rad)--(-1.7*\rad,0)
  (-\rad,-\rad)node [below] {$\Sigma_{l,-}$} --(0,0)

  (3*\rad,\rad)node [above left] {$\Sigma_{r,+}^\triangle\qquad$} --(3*\rad,0)
  (1.7*\rad,\rad)--(3*\rad,\rad)
  (\rad,\rad)--(1.7*\rad,\rad)
  (1.7*\rad,\rad)--(1.7*\rad,0)
  (0,0)-- (\rad,\rad) node [above] {$\Sigma_{r,+}$}

   (3*\rad,-\rad)node [below left] {$\Sigma_{r,-}^\triangle\qquad$} --(3*\rad,0)
  (1.7*\rad,-\rad)--(3*\rad,-\rad)
  (\rad,-\rad)--(1.7*\rad,-\rad)
  (1.7*\rad,-\rad)--(1.7*\rad,0)
  (0,0)--(\rad,-\rad) node [below] {$\Sigma_{r,-}$}

  ;
 \filldraw (-3*\rad,0) circle (1pt)
 node [left] {$-k_n$};

\filldraw (-1.7*\rad,0) circle (1pt)
  node [below right] {$-2$};

\filldraw (1.7*\rad,0) circle (1pt)
  node [below left] {$2$};

\filldraw (0,0) circle (1pt)
 node [below] {$0$};

 \filldraw (3*\rad,0) circle (1pt)
 node [right] {$k_n$};

\node [above] at (0,1.2) {$\Omega_\infty$};

\node [above] at (-2.3*\rad,.6) {$\Omega_{l,+}^\triangle$};

\node [above] at (2.3*\rad,.6) {$\Omega_{r,+}^\triangle$};

\node [below] at (-2.3*\rad,-.6) {$\Omega_{l,-}^\triangle$};

\node [below] at (2.3*\rad,-.6) {$\Omega_{r,-}^\triangle$};

\node [above] at (-1.1*\rad,.6) {$\Omega_{l,+}$};

\node [above] at (1.1*\rad,.6) {$\Omega_{r,+}$};

\node [below] at (-1.1*\rad,-.6) {$\Omega_{l,-}$};

\node [below] at (1.1*\rad,-.6) {$\Omega_{r,-}$};

\node [left] at (-3*\rad,\rad) {$-k_n+i\delta$};

\node [left] at (-3*\rad,-\rad) {$-k_n-i\delta$};

\end{tikzpicture}

\caption{The contour $\Sigma$ and the region $\Omega$.}
\label{figure1}
\end{figure}
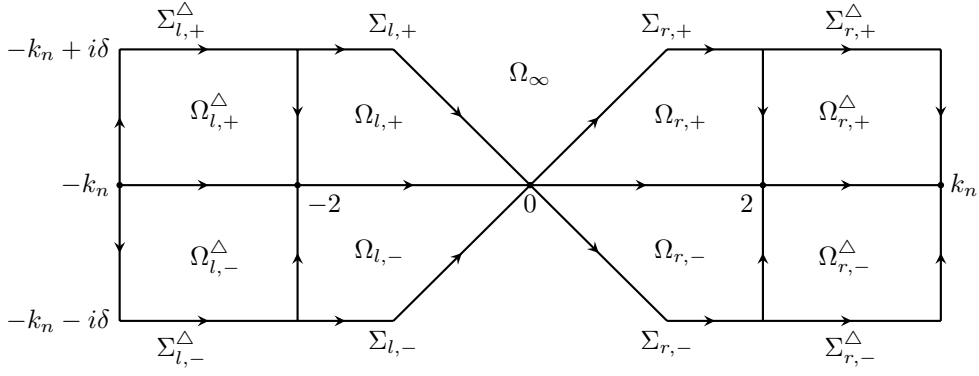

Let $\delta$ be a small positive number and $k_n=\sqrt{n/\alpha}+\delta$, 
we divide the complex plane into nine parts: $\Omega=\Omega_{l,\pm}\cup\Omega_{r,\pm}
\cup
\Omega_{l,\pm}^\triangle\cup\Omega_{r,\pm}^\triangle\cup \Omega_\infty
$ by the contour $\Sigma=(-k_n,k_n) \cup
\Sigma_{l,\pm}\cup\Sigma_{r,\pm}\cup \Sigma_{l,\pm}^\triangle  \cup \Sigma_{r,\pm}^\triangle$; see Figure \ref{figure1}.
Note that here we choose  the contour
like $\Sigma_{l,\pm}$ and $\Sigma_{r,\pm}$ so as to avoid the zeros of $\Pi(z)$ and $w(z)$; see \eqref{u} and \eqref{u-tria}.
To reduce the IP to a RHP, we shall employ an idea of Bleher and  Liechty  \cite{BleherE,Bleher} with some modifications.
Define
\begin{equation}\label{R}
	R(z):=
	U(z)\times
	\begin{cases}
	\vspace{.1cm}
	\mathcal{U}_+(z),  &  z\in \Omega_{l,+}\cup \Omega_{r,-},\\
	\vspace{.1cm}
	\mathcal{U}_-(z),  &  z\in \Omega_{l,-}\cup \Omega_{r,+},\\
	\vspace{.1cm}
	\mathcal{U}_+^\triangle(z),  &  z\in \Omega_{l,+}^\triangle\cup \Omega_{r,-}^\triangle,\\
	\vspace{.1cm}
	\mathcal{U}_-^\triangle(z),  &  z\in \Omega_{l,-}^\triangle\cup \Omega_{r,+}^\triangle,
	\\
	\vspace{.1cm}
	I,   &  z\in \Omega_\infty.
\end{cases}
\end{equation}

\begin{prop}
	For each $\pm X_k\in X$, the singularity of $R(z)$ at $\pm X_k$ is removable; that is, $\Res\limits_{z=\pm X_k}
R(z)=0$.
\end{prop}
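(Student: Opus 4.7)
The plan is to verify, at each node $\pm X_k$, that the simple pole of $U(z)$ inherited from \eqref{U-residue} is exactly cancelled by the simple pole of $\mathcal{U}_\pm(z)$ or $\mathcal{U}_\pm^{\triangle}(z)$ at that node, so that $R(z)$ extends analytically there. The two local facts I will lean on are that \eqref{Pi-prop} gives $\Pi(z) = (-1)^k(z\mp X_k) + O((z\mp X_k)^2)$, and that the quantization $X_k^2 = n/(k+\alpha)$ forces $\theta(\pm X_k) = k\pi$, whence $e^{\pm i\theta(\pm X_k)} = (-1)^k$.

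First I would unpack \eqref{U-residue}: writing the columns of $U(z)$ as $u_1(z)$ and $u_2(z)$, the residue condition forces $u_1$ to be holomorphic at $\pm X_k$, while $u_2$ has a simple pole there of the form $u_2(z) = \dfrac{w(\pm X_k)\,u_1(\pm X_k)}{z \mp X_k} + (\text{analytic})$. For a node lying in one of the thin regions $\Omega_{r,\pm}, \Omega_{l,\pm}$ (i.e.\ $|X_k|<2$), \eqref{R} gives $R = U\mathcal{U}_{\pm}$; the first column of $R$ is simply $u_1$, already holomorphic, while the second column is $u_2(z) - \dfrac{w(z)\,e^{\pm i\theta(z)}}{\Pi(z)}\, u_1(z)$, whose residue at $\pm X_k$ is
\begin{equation*}
w(\pm X_k)\,u_1(\pm X_k) \;-\; w(\pm X_k)\,u_1(\pm X_k)\cdot\frac{e^{\pm i\theta(\pm X_k)}}{\Pi'(\pm X_k)} \;=\; w(\pm X_k)\,u_1(\pm X_k)\left(1-\frac{(-1)^k}{(-1)^k}\right) = 0.
\end{equation*}

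For a node lying in one of the triangular regions $\Omega_{r,\pm}^{\triangle}, \Omega_{l,\pm}^{\triangle}$ (i.e.\ $|X_k|>2$), I would first expand out the prefactor $\Pi^{-\sigma_3}$ in \eqref{u-tria} to obtain $\mathcal{U}_{\pm}^{\triangle}(z) = \begin{pmatrix}\Pi(z)^{-1} & 0 \\ -e^{\pm i\theta(z)}/w(z) & \Pi(z)\end{pmatrix}$. The second column of $R = U\mathcal{U}_{\pm}^{\triangle}$ is then $\Pi(z)\,u_2(z)$, manifestly analytic as a simple zero meets a simple pole; the first column is $u_1(z)/\Pi(z) - u_2(z)\,e^{\pm i\theta(z)}/w(z)$, whose residue at $\pm X_k$ is $\dfrac{u_1(\pm X_k)}{\Pi'(\pm X_k)} - \dfrac{w(\pm X_k)\, u_1(\pm X_k)\,e^{\pm i\theta(\pm X_k)}}{w(\pm X_k)} = (-1)^k u_1(\pm X_k) - (-1)^k u_1(\pm X_k) = 0$.

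The whole argument is essentially linear-algebraic bookkeeping; the only point to watch is the matching of signs, namely that both $\Pi'(\pm X_k)$ and $e^{\pm i\theta(\pm X_k)}$ equal $(-1)^k$, so they cancel uniformly in $k$ and in the choice of region. This is precisely the motivation for the particular choice of $\Pi$ in \eqref{Pi}. The negative nodes $-X_k$ require no extra work since $\theta$ is even in $z$, making $\Pi'$ even as well (as the derivative of an odd function), so all the identities above hold verbatim at $-X_k$.
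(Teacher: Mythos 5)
Your proof is correct and follows essentially the same route as the paper's: in both cases the residue at each node is computed entry by entry (or column by column, which is the same bookkeeping), and cancellation comes from the two coinciding facts $\Pi'(\pm X_k)=(-1)^k$ (from \eqref{Pi-prop}) and $e^{\pm i\theta(\pm X_k)}=(-1)^k$. The only cosmetic difference is that you phrase the computation via the columns $u_1,u_2$ and make the source of the sign $(-1)^k$ from $\theta(\pm X_k)=k\pi$ explicit, whereas the paper leaves that implicit in its use of \eqref{Pi-prop}.
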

\begin{proof}
    For $z\in \Omega_{r,\pm}$, it follows from \eqref{R} that
\begin{equation}\label{R-U-1}
	R_{11}(z)=U_{11}(z),
	\qquad
	R_{12}(z)
	=U_{12}(z)-U_{11}(z)
	\frac{w(z)}{\Pi(z)}e^{\pm i\theta(z)}.
\end{equation}
Consider any $X_k\in (0,2)$.
By \eqref{Pi-prop} and \eqref{R-U-1}, the residue of
  $R_{12}(z)$ at $X_k$ is given by
\begin{equation}
	\Res_{z=X_k} R_{12}(z)
	= w(X_k)U_{11}(X_k)-
	U_{11}(X_k) \frac{w(X_k)}{(-1)^k}
	(-1)^k
	=0
	.
\end{equation}
Similarly, we also have
$
	\Res\limits_{z=X_k}
	R_{22}(z)=0,
$
thus
\begin{equation}
	\Res_{z=X_k}
	R(z)=0 \quad
	\mbox{for }
	X_k\in (0,2).
\end{equation}

On the other hand,
it is readily seen that
for $z\in \Omega_{r,\pm }^\triangle$
\begin{equation}\label{R-U-2}
	R_{11}(z)=U_{11}(z)
	\Pi(z)^{-1}
	-w(z)^{-1}  U_{12}(z)
	e^{\pm i\theta(z)}
	,
	\qquad
	R_{12}(z)
	=U_{12}(z)\Pi(z).
\end{equation}
For $X_k\in (2,k_n)$,
since  the pole
of the entry $R_{12}(z)$
at $X_k$ is canceled by the zero
of the function $\Pi(z)$, $R_{12}(z)$ has no pole at $X_k$.
Moreover, from \eqref{Pi-prop} and \eqref{R-U-2} we have
\begin{equation}
	\Res_{z=X_k} R_{11}(z)
	= (-1)^k U_{11}(X_k)
	-
	\frac{1}{w(X_k)}
	U_{11}(X_k) w(X_k)
	(-1)^k
	=0.
\end{equation}
Similarly, it can be shown that
$
	\Res\limits_{z=X_k}
	R_{22}(z)=0,
$
thus,
\begin{equation}
	\Res_{z=X_k}
	R(z)=0, \qquad
	X_k\in (2,k_n).
\end{equation}

In the same way, we obtain that for
each $-X_k\in X$
\begin{equation}
	\Res_{z=-X_k}
	R(z)=0,
\end{equation}
hence, $R(z)$ has no pole at $\pm X_k\in X$.

\end{proof}

Note that this transformation makes $R_+(z)$ and $R_-(z)$ continuous on the interval $(-k_n,k_n)$.
As a consequence, we have created several jump discontinuities on the contour $\Sigma$ in the complex plane.
It is easily verified that $R(z)$ is a solution of the following RHP:

\begin{description}
  \item[($R_1$)]     $R(z)$ is analytic for
  $z\in\mathbb{C}\setminus \Sigma$;

  \item[($R_2$)]
  for $z\in\Sigma$, $R(z)$ satisfies
\[
    R_+(z)=R_-(z) J_R(z),
\]
where
for $z$ on the real line
\begin{equation}
	J_R(z)=
	\begin{cases}
\vspace{.1cm}
      \begin{pmatrix}
	1   &    0
		\\
	 -2i \gamma(z)/w(z)
	  & 1
	\end{pmatrix},  &  z\in(-k_n,-2),
\\
\vspace{.1cm}
		\begin{pmatrix}
		1   &    -2i \gamma(z)  w(z)  \\
			0  & 1
		\end{pmatrix},
		&
      z\in (-2,0),
\\
		\begin{pmatrix}
		1   &    2i \gamma(z)  w(z)  \\
			0  & 1
		\end{pmatrix},
		&
      z\in (0,2),
\\
      \begin{pmatrix}
	1   &    0
		\\
	 2i \gamma(z)/w(z)
	  & 1
	\end{pmatrix},  &  z\in(2,k_n),
\end{cases}
\end{equation}
for $z\in \Sigma_{l,\pm}\cup \Sigma_{l,\pm}^\triangle$
\begin{equation}
	J_R(z)=
	\begin{cases}
\vspace{.1cm}
\mathcal{U}_+(z)^{-1},
	&      z\in \Sigma_{l,+},
\\
\vspace{.1cm}
		\mathcal{U}_-(z),
		&      z\in \Sigma_{l,-},
\\
\vspace{.1cm}
\mathcal{U}_+^\triangle(z)^{-1},
	&      z\in \Sigma^\triangle_{l,+},
\\
\vspace{.1cm}
\mathcal{U}_-^\triangle(z),
	&      z\in \Sigma^\triangle_{l,-},
\end{cases}
\end{equation}
for $z\in \Sigma_{r,\pm}\cup \Sigma_{r,\pm}^\triangle$
\begin{equation}
	J_R(z)=
	\begin{cases}
\vspace{.1cm}
\mathcal{U}_-(z)^{-1},
	&      z\in \Sigma_{r,+},
\\
\vspace{.1cm}
		\mathcal{U}_+(z),
		&      z\in \Sigma_{r,-},
\\
\vspace{.1cm}
\mathcal{U}_-^\triangle(z)^{-1},
	&      z\in \Sigma^\triangle_{r,+},
\\
\vspace{.1cm}
\mathcal{U}_+^\triangle(z),
	&      z\in \Sigma^\triangle_{r,-},
\end{cases}
\end{equation}
and
\begin{equation}
	J_R(z)=
	\begin{cases}
\vspace{.1cm}
	\begin{pmatrix}
	\Pi(z)
	&   -w(z) e^{i\theta(z)}
	\\
	e^{i\theta(z)}/w(z)
	  &    -2i \gamma(z) e^{i\theta(z)}
	\end{pmatrix},
	&
             z\in -2 +(0,\delta)i \cup
             2- (0,\delta)i,
\\
	\begin{pmatrix}
	 2i \gamma(z)  e^{- i \theta(z)}
	&   w(z) e^{ - i\theta(z) }
	\\
	- e^{ - i \theta(z) } /w(z)
	  &   \Pi(z)
	\end{pmatrix},
           &   z\in -2- (0,\delta)i
           \cup 2+(0,\delta)i
           ;
\end{cases}
\end{equation}

\item[($R_3$)]    as $z\to \infty$,
  \begin{equation}\label{R infinity}R(z)=\left (I+O\left (1/z\right )\right )\left(
                               \begin{array}{cc}
                                 z^{n} & 0 \\
                                 0 & z^{-n} \\
                               \end{array}
                             \right);
\end{equation}
\item[($R_4$)] $R(z)$ has the following behavior as $z\to 0$,
     \begin{equation}
R(z)=O\left(
                               \begin{array}{cc}
                                 1 & \log|z| \\
                                 1 & \log|z| \\
                               \end{array}
                             \right).
                             \end{equation}

\end{description}

\section{The transformations $R\to T \to Q$}

To normalize the behavior at infinity,  we introduce the first transformation
\begin{equation}\label{T}
	T(z):=
	e^{ (-nl/2)\sigma_3 }
	R(z)  e^{ (-n g(z)+nl/2)
	\sigma_3 }.
\end{equation}

Now, we need some properties of the function $g(z)$.
In a similar manner as in \cite{LinWong},
the derivative of $g(z)$ can be calculated explicitly to give
\begin{equation}\label{g-prime}
 g'(z)= \frac{4}{z^3} \log( z+\sqrt{z^2-4} )
 +\frac{ \sqrt{z^2-4} }{ z^2 }  -\frac{4}{z^3}
 \log 2 \mp \frac{2 \pi}{  z^3} i
 \qquad \mbox{for } z\in\mathbb{C}_\pm.
\end{equation}

\begin{prop}

 The function $g'(z)$ satisfies
 \begin{equation}\label{g-prime-jump}
  g'_{\pm }(x)  =  \mp \pi i \psi(x),
  \qquad  x\in(-2,2);
 \end{equation}cf. \eqref{psi},
where $g_+'(x)$ and $g_-'(x)$ refer to the limiting values from the upper and lower half
planes, respectively.
Also,
it follows from \eqref{g-prime} and \eqref{g-prime-jump} that
\begin{equation}
    g_+(x)+g_-(x)
    \begin{cases}
      =l,  &   x\in (-2,2),
      \\
      >l,   &   x\in (-\infty,-2) \cup (2,+\infty);
    \end{cases}
\end{equation}where   $l=2\int_{-\infty}^{\infty} \log|2-s| \psi(s) ds$  is  the Lagrange multiplier;  cf. \eqref{l}.
Moreover, $g(z)$ is analytic in $\mathbb{C}\setminus \mathbb{R}$, and
\begin{equation}
 g(z) =\log z + O(1/z)
 \qquad \mbox{as } z\to\infty.
\end{equation}

\end{prop}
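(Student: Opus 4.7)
The plan is to work directly from the integral representation \eqref{g} together with the explicit formula \eqref{g-prime} (which is taken as given, being computed as in the cited reference). The last two assertions of the proposition are essentially immediate. Analyticity of $g$ on $\mathbb{C}\setminus\mathbb{R}$ follows because for $z_0\notin\mathbb{R}$ the integrand $\log(z-s)\psi(s)$ is holomorphic in $z$ near $z_0$, while the decay $\psi(s)=O(|s|^{-3})$ from \eqref{psi} and dominated convergence allow differentiation under the integral. For the behaviour at infinity, I would use the fact that $\psi$ is a probability density on $\mathbb{R}$, rewrite
\begin{equation*}
g(z)=\log z+\int_{-\infty}^{\infty}\log\!\left(1-\frac{s}{z}\right)\psi(s)\,ds,
\end{equation*}
and expand in powers of $1/z$; by the evenness of $\psi$ the $O(1/z)$ contribution $-z^{-1}\!\int s\,\psi(s)\,ds$ vanishes, and the tail estimate $\psi(s)=O(|s|^{-3})$ gives $g(z)=\log z+O(1/z)$ uniformly as $z\to\infty$.

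To prove the jump identity \eqref{g-prime-jump}, I would take boundary values directly in \eqref{g-prime}. With the prescribed branches, for $x\in(-2,2)$ one has $\sqrt{z^2-4}_\pm=\pm i\sqrt{4-x^2}$ and $(z+\sqrt{z^2-4})_\pm=x\pm i\sqrt{4-x^2}$, whose modulus is $2$; hence $\log(z+\sqrt{z^2-4})_\pm=\log 2\pm i\arccos(x/2)$. Inserting into \eqref{g-prime} gives
\begin{equation*}
g'_\pm(x)=\pm\frac{4i\arccos(x/2)}{x^3}\pm\frac{i\sqrt{4-x^2}}{x^2}\mp\frac{2\pi i}{x^3},\qquad x\in(0,2).
\end{equation*}
Summation yields $g'_+(x)+g'_-(x)=0$. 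Subtraction, combined with the elementary identity $\arccos(x/2)=\frac{\pi}{2}-\arctan\!\bigl(x/\sqrt{4-x^2}\bigr)$ valid on $(0,2)$, rearranges into $-2\pi i\,\psi(x)$ with $\psi$ as in \eqref{psi}; this is equivalent to $g'_\pm(x)=\mp\pi i\,\psi(x)$. The subinterval $(-2,0)$ is handled by the evenness of $\psi$.

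For the Lagrange-multiplier statement on $g_++g_-$, I would integrate the identity $(g_++g_-)'(x)=g'_+(x)+g'_-(x)$. On $(-2,2)$ the right-hand side vanishes by the previous step, so $g_++g_-$ is constant there; the value of the constant is identified as $l$ by evaluating the limit at $x=2$, where a short branch-tracking argument in \eqref{g} shows $g_+(2)+g_-(2)=2\int_{\mathbb{R}}\log|2-s|\psi(s)\,ds$. Off $[-2,2]$ the formula \eqref{g-prime} shows that the first three summands of $g'(z)$ are real on the real axis, so $(g_++g_-)'(x)=2\,\mathrm{Re}\,g'(x)=\tfrac{8}{x^3}\log\!\bigl((x+\sqrt{x^2-4})/2\bigr)+\tfrac{2\sqrt{x^2-4}}{x^2}$, which is strictly positive for $x>2$; integrating outward from $x=2$ then yields $g_++g_->l$ on $(2,\infty)$, and the complementary interval $(-\infty,-2)$ follows by symmetry.

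The main technical obstacle is the branch bookkeeping in the second paragraph --- ensuring that the $\arccos$/$\arctan$ identity is applied on the correct range and that the subtracted boundary values reassemble into the exact formula for $\psi(x)$ given in \eqref{psi}. Everything else is a mechanical consequence of \eqref{g} and \eqref{g-prime}.
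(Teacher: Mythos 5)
The paper does not actually write out a proof of this proposition; it merely asserts the claims as consequences of the explicit formula \eqref{g-prime}, citing the analogous computation in the reference for the Hahn polynomials. Your argument supplies exactly the computation the paper leaves implicit, and it is correct: the boundary values of $\sqrt{z^2-4}$ and $\log(z+\sqrt{z^2-4})$ on $(-2,2)$ are handled properly, the identity $\arccos(x/2)=\pi/2-\arctan\bigl(x/\sqrt{4-x^2}\bigr)$ converts the result into the form of $\psi$ in \eqref{psi}, the evaluation $g_+(2)+g_-(2)=2\int\log|2-s|\,\psi(s)\,ds=l$ is correct, and the positivity of $2\operatorname{Re} g'(x)$ on $(2,\infty)$ gives the strict inequality there. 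Two very minor remarks: the $\arccos$/$\arctan$ identity in fact holds on all of $(-2,2)$, so the appeal to evenness of $\psi$ on $(-2,0)$ is redundant (one only needs to track the absolute value $|x|$ in \eqref{psi} against the sign of $x^3$); and the asymptotic $g(z)=\log z+O(1/z)$ rests on $\int\psi=1$, which you state but could note is forced by the normalization of the equilibrium measure.
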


It is readily seen that $T(z)$ solves the following RHP:
\begin{description}
  \item[($T_1$)]     $Y(z)$ is analytic in
  $\mathbb{C}\setminus \Sigma$;

\item[($Y_2$)]    for $z\in\Sigma$
\begin{equation}
   T_+(z)=T_-(z)J_T(z),
\end{equation}
where
\begin{equation}
	J_T(z)
	=
	\begin{cases}
	e^{n (g_-(z)-l/2)\sigma_3 }J_R(z) e^{ -n(g_+(z) -l/2)\sigma_3 } & \mbox{for } z\in \mathbb{R},
	\\
	e^{n (g(z)-l/2)\sigma_3 }J_R(z) e^{ -n(g(z) -l/2)\sigma_3 } & \mbox{for } z\in \Sigma\setminus \mathbb{R};
\end{cases}
\end{equation}

\item[($T_3$)]    as $z \to \infty$,
 \begin{equation}\label{T_3}
  T(z)=I+O\left (1 /z\right );
\end{equation}
\item[($T_4$)]
$T(z)$ has the behavior
as $z\to 0$,
 \begin{equation}\label{T_4}
T(z)=O
                               \begin{pmatrix}
                                 1 & \log |z| \\
                                 1 & \log |z| \\
                               \end{pmatrix}.
                               \end{equation}
\end{description}
In particular, for $z\in \mathbb{R}$ we have
\begin{equation}\label{J_T}
	J_T(z)=
	\begin{cases}
\vspace{.1cm}
	\begin{pmatrix}
	  e^{2n\pi i /z^2}  &  0
	  \\
	     0  &   e^{-2n\pi i /z^2}
\end{pmatrix}, &   z\in (-\infty,-k_n),
\\
\vspace{.1cm}
\begin{pmatrix}
	e^{2i (\theta(z)
	 +\pi \alpha
	)}  &  0 \\
	-\frac{2i \gamma(z)}
	{w(z)}
	e^{ n(l-g_+(z)-g_-(z)) }
	   &  e^{-2i (\theta(z)
	 +\pi \alpha
	)}
\end{pmatrix},
   &  z\in(-k_n,-2),
\\
\vspace{.1cm}
	\begin{pmatrix}
	e^{-2n\phi_-(z)}  &  -2i\gamma(z)
	  w(z) 
	\\
	0  &  e^{-2n\phi_+(z)}
\end{pmatrix},
&  z\in(-2,0),
\\
\vspace{.1cm}
	\begin{pmatrix}
	e^{-2n\phi_-(z)}  &  2i\gamma(z)
	  w(z)  
	\\
	0  &  e^{-2n\phi_+(z)}
\end{pmatrix},
&  z\in(0,2),
\\
\vspace{.1cm}
   \begin{pmatrix}
	e^{-2i (\theta(z)
	 +\pi \alpha
	)}  &  0 \\
	\frac{2i \gamma(z)}
	{w(z)}
	e^{ n(l-g_+(z)-g_-(z)) }
	   &  e^{2i (\theta(z)
	 +\pi \alpha
	)}
\end{pmatrix},
   &  z\in(2,k_n),
\\
\vspace{.1cm}
	\begin{pmatrix}
	  e^{-2n\pi i /z^2}  &  0
	  \\
	     0  &   e^{2n\pi i /z^2}
\end{pmatrix}, &   z\in (k_n,+\infty).
\end{cases}
\end{equation}

Note that for the jump matrix on $(0,2)$, we have the following factorization
\begin{equation}\label{T-fact}
	\begin{pmatrix}
	e^{-2n\phi_-(z)}  &  2i\gamma(z)
	  w(z)
	\\
	0  &  e^{-2n\phi_+(z)}
\end{pmatrix}
=
  	\begin{pmatrix}
	1  &  0\\
	\frac{e^{2n\phi_-(z)}}{2i\gamma(z)w(z)} & 	1
\end{pmatrix}
  \begin{pmatrix}
	0	&	2i\gamma(z)w(z)\\
	-\frac{1}{2i\gamma(z)w(z)}	&	0
\end{pmatrix}
\begin{pmatrix}
	1  &  0\\
	\frac{e^{2n\phi_+(z)}}{2i\gamma(z)w(z)} & 	1
\end{pmatrix},
\end{equation}
which allows us to reduce the jump matrix $J_T(z)$ to a simpler one  on the line segment $(0,2)$.
Also, note that for $z\in\Sigma^\triangle_{r,+}$, the (1,1) entry of $J_T(z)$ is $\frac{ 1}{2i \gamma(z)}e^{ i \theta(z)}
(1-e^{- 2i\theta(z)})$, and it behaves as $\frac{ 1}{2i \gamma(z)}e^{ i \theta(z)}$  for large $n$.
Now we are in a position   to introduce the second transformation $T\to Q$:
\begin{equation}\label{Q}
	Q(z):=
	T(z)\times
	\begin{cases}
\vspace{.13cm}
	-\mathcal{V}_-^\triangle (z)
	  , &  z\in \Omega_{l,+}^\triangle
	  \cup\Omega_{r,-}^\triangle,
\\
\vspace{.13cm}	
	\mathcal{V}_+^\triangle (z)
	  , &  z\in
	  \Omega_{l,-}^\triangle \cup
	  \Omega_{r,+}^\triangle,
\\
\vspace{.13cm}	
\mathcal{V}_+ (z)
	  , &  z\in \Omega_{l,+}
	  \cup\Omega_{r,-},
\\
\vspace{.13cm}	
\mathcal{V}_- (z)
	  , &  z\in
	  \Omega_{l,-} \cup
	  \Omega_{r,+},
\\
\vspace{.13cm}	
     I,
     &  z\in \Omega_\infty,
\end{cases}
\end{equation}
where
\begin{equation}\label{V-pm}
 \mathcal{V}_\pm(z):=
 \begin{pmatrix}
	1  & 0 \\
	\pm \frac{ e^{2n\phi(z)} }
	{2i \gamma(z)w(z)}
	&  1
\end{pmatrix}
\qquad \mbox{and} \qquad
\mathcal{V}_\pm^\triangle(z):=
\begin{pmatrix}
	\frac{1}
	{2 i \gamma(z) }  e^{\pm i\theta(z) }   &  0  \\
	0  &  2i\gamma(z)
	  e^{\mp  i\theta(z) }
\end{pmatrix}.
\end{equation}
Note that for $z\in \Sigma\setminus (-2,2)$, we expect that
$$
	J_Q(z) \sim I.
$$
However, for $z\in\Sigma_{r,\pm}^\triangle$,
since $1-e^{\mp 2i \theta(z)} =O( 1)$
when $z= O(k_n)$, the jump matrix $J_Q(z) \not \sim I$ near the critical point $z=k_n$.
 The reason is that
the interval $(2,k_n)$ is a saturated region,
so special attention must be paid to the edges of the saturated regions (c.f. the contour $\Sigma_{l,\pm}^\triangle$ or $\Sigma_{r,\pm}^\triangle$); see \cite{Baik}
and \cite{WangWong}.

Next,  we introduce some auxiliary functions, known as
 the $D$-functions, which are analogous to the function used in \cite{LinWong} to remove the jumps of RHP near  the edges of the saturated regions.
Recall
\begin{equation}\label{D-recall}
D(z):= \frac{\Gamma(\alpha-n/z^2)}{e^{n/z^2}\sqrt{2\pi}}\left(-\frac{n}{z^2}\right)^{n/z^2-\alpha+1/2}\qquad
\mbox{for } z\in \mathbb{C}\setminus \mathbb{R},
\end{equation}introduced in \eqref{D},
and  we let
\begin{equation} \label{D-tilde}
	\widetilde D(z):=
	\frac{\sqrt{2\pi}}
	{\Gamma(1+n/z^2-\alpha)
	e^{n/z^2}
	 }
	 \left(
	 \frac{n}{z^2}
	 \right)^{
	 n/z^2 -\alpha +1/2
	 },
\end{equation}
{
where the branches are chosen as $\arg z\in (-\pi, \pi)$ in \eqref{D-tilde}, while in \eqref{D-recall},  $-1/z^2=e^{\pm \pi i}/z^2$ for $z\in \mathbb{C}_\pm$.}
It is readily seen that  the function $\widetilde D(z)$ is a non-zero
analytic function on $\mathbb{C} \setminus
	 (-\infty,0]$ such that
\begin{equation}\label{D-e-1}
	\widetilde D(z)=D(z)(1-e^{\mp 2i \theta(z)}),
	\qquad
	z\in \mathbb{C}_\pm.
\end{equation}
Moreover, we have as $n\to\infty$,
\begin{equation}\label{D-tilde-asy}
   \widetilde D(z)= 1+O(1/n),
\end{equation}
holding uniformly  for $z$ in any compact set $K\subset \mathbb{C}\setminus (-\infty,0]$, and
as $z\to\infty$,
\begin{equation}
   D(z)= \frac{  \Gamma(\alpha) }
   {\sqrt{2 \pi}} n^{1/2-\alpha}(-z^2)^{\alpha-1/2}  + O(1/z).
\end{equation}
The main difference between the function $D(z)$ and that in \cite{LinWong},
lies in the behavior as $z$ goes to infinity, therefore another function is needed in order to normalize the behavior at infinity.

Define
\begin{equation}\label{E}
	E(z):=
	\frac{
	\sqrt{2\pi}
	 }
	{ \Gamma(\alpha) }
	(4-z^2)^{1/2-\alpha},
	\qquad z\in \mathbb{C}\setminus
	(-\infty,-2) \cup (2,+\infty),
\end{equation}
where we always take the principal branches  of $(2-z)^{1/2-\alpha}$ and $(z+2)^{1/2-\alpha}$.
It is based on two observations. The first is that,
\begin{equation}
 D_+(z)E_+(z)
    =D_-(z)E_-(z)
    \times
\begin{cases}
    e^{2n\pi i /z^2},  &  z\in(2,+\infty), \\
    e^{-2n\pi i /z^2},  &  z\in(-\infty,-2),
\end{cases}
\end{equation}
which is exactly the jumps of $J_T(z)$ for $z\in(-\infty,-k_n)\cup (k_n,+\infty)$; see also
\eqref{J_T}, and
$D(z)E(z)$  tends to $n^{1/2-\alpha}$ as $z\to \infty$.
The second observation is the  fact that
\begin{equation}
[E_-(z) \widetilde D_-(z) ]^{-\sigma_3} J_T(z) [E_+(z)\widetilde D_+(z)]^{\sigma_3} \sim I,
\end{equation}
uniformly for $z\in (-k_n,-2) \cup (2,k_n)$.

Similarly, we set
\begin{equation} \label{D-hat}
	\widehat D(z):=
	\frac{\sqrt{2\pi}}
	{\Gamma(1+n/z^2-\alpha)
	e^{n/z^2}
	 }
	 \left(
	 \frac{n}{(- z)^2}
	 \right)^{
	 n/z^2 -\alpha +1/2
	 },
\end{equation}
which is analytic in
$z\in
	 \mathbb{C} \setminus
	 [0,+\infty)$, with the branch
  taken as $\arg(-z)\in (-\pi, \pi)$, and it satisfies
\begin{equation}\label{D-e-2}
	\widehat D(z)=D(z)(1-e^{\pm 2i \theta(z)}),
	\qquad
	z\in \mathbb{C}_\pm.
\end{equation}
Moreover,
\begin{equation}\label{E-tilde}
	\widetilde E(z):=
	\frac{
	\sqrt{2\pi}
	 }
	{ \Gamma(\alpha) }
	(z^2-4)^{1/2-\alpha},
	\qquad
	z\in \mathbb{C}\setminus (-\infty,2)
	,
\end{equation}
and
\begin{equation}\label{E-hat}
	\widehat E(z):=
	\frac{
	\sqrt{2\pi}
	 }
	{ \Gamma(\alpha) }
	(-z-2)^{1/2-\alpha}
	(2-z)^{1/2-\alpha}
	,
	\qquad
	z\in \mathbb{C}\setminus (-2,+\infty)
	.
\end{equation}

For simplicity, we also bring in the notations
\begin{equation}\label{w_0}
   w_0(z):= 2 i\gamma(z)
   w(z)/ E^2(z),
\end{equation}
and
\begin{equation}\label{w-tilde}
   \widetilde
    w(z)
    :=  -2i \gamma(z) w(z)/ \widetilde E^2(z),
\qquad
   \widehat
    w(z)
    :=  -2i \gamma(z) w(z)/ \widehat E^2(z).
\end{equation}

As we have mentioned before, the density function $\psi(x)$ attains    its upper
constraint at $x=\pm 2$, the so-called  band-saturated region  endpoints; see \eqref{psi}.
Furthermore, since $\psi(x)$ is not differentiable at the point $x=\pm 2$, the function $\phi(z)$ is not analytic there,  nor can we   construct our local parametrix there  (such as the Airy parametrix) by using $\phi(z)$. Therefore, 
for our future analysis, a few more auxiliary functions are needed. To this aim, we resume 
 the function  $\widetilde \phi(z)$ in \eqref{phi-tilde}, and  define
\begin{equation}\label{phi-hat}
	\widehat \phi(z):=
	\int_{-2}^z \left (
	-g'(s) \pm \frac{2\pi i}{s^3}
	\right)
	ds
	\qquad  \mbox{for } z\in \mathbb{C}_\pm,
\end{equation}
which is analytic in $\mathbb{C}
	\setminus [-2,+\infty)$.
The functions $\widetilde \phi(z)$ and $\widehat \phi(z)$ will  play an important role in our argument, and the following are some of their properties.

\begin{prop}\label{pro-phi-tilde}
	With $\phi(z)$ defined in \eqref{phi}, the following connection formulas between the $\phi-$function and the
$\widetilde \phi-$function ($\widehat \phi-$function) hold:
\begin{equation}\label{phi-tilde-phi}
	\widetilde \phi(z)=\phi(z)
	\pm \frac{\pi i}{z^2}
\end{equation}
and
\begin{equation}\label{phi-hat-phi}
	\widehat \phi(z)=\phi(z)
	\mp \left( \frac{1}{z^2}-1  \right)
	\pi i
\end{equation}
for $z\in \mathbb{C}_\pm$.

If $x\in(2,+\infty)$, we have
\begin{equation}\label{phi-tilde-0}
   \Im \widetilde \phi(x) =0,
   \quad \mbox{ and }
   \quad
   \Re \widetilde \phi(x) <0.
\end{equation}
Similarly, for $x\in(-\infty,-2)$,
\begin{equation}\label{phi-hat-0}
   \Im \widehat \phi(x) =0,
   \quad \mbox{ and }
   \quad
   \Re \widehat \phi(x) <0.
\end{equation}

   Moreover, for $x\in(-2,2)$, we have
\begin{equation}\label{phi-0}
   \Re \phi(x\pm i \varepsilon)  <0.
\end{equation}

\end{prop}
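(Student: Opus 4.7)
The plan is to prove the four displayed statements essentially in the order they appear, with the connection formulas doing the heavy lifting for the others.

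First I would verify the identities \eqref{phi-tilde-phi} and \eqref{phi-hat-phi}. Since $\phi'(z)=-g'(z)$ by \eqref{phi}, the defining integrals for $\widetilde\phi$ and $\widehat\phi$ give directly
\[
\widetilde\phi(z)=\phi(z)-\phi(2_\pm)\pm\frac{\pi i}{z^2}\mp\frac{\pi i}{4},\qquad
\widehat\phi(z)=\phi(z)-\phi(-2_\pm)\mp\frac{\pi i}{z^2}\pm\frac{\pi i}{4},
\]
using $\int 2\pi i\,s^{-3}\,ds=-\pi i/s^2$. So the task reduces to computing the boundary values of $\phi$ at $\pm 2$. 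Splitting $g_+(\pm 2)=\int\log(\pm 2-s)\psi(s)\,ds$ according to the sign of $\pm 2-s$ and using $\arg\in(-\pi,\pi)$, one gets an imaginary contribution $\pi i$ times the mass of $\psi$ on the appropriate half-line. From \eqref{psi} one verifies $\int_{2}^{\infty}\psi=\int_{-\infty}^{-2}\psi=\tfrac14$ (with total mass $1$), hence $\phi(2_\pm)=\mp\pi i/4$ and $\phi(-2_\pm)=\mp 3\pi i/4$. Plugging these in reduces the displays to exactly \eqref{phi-tilde-phi} and \eqref{phi-hat-phi}.

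Next, for \eqref{phi-tilde-0} on $(2,+\infty)$, I would use the connection formula with the upper-half-plane sign and write
\[
\widetilde\phi(x)=\phi_+(x)+\frac{\pi i}{x^2}=\frac{l}{2}-\int_{-\infty}^{x}\log|x-s|\psi(s)\,ds-\pi i\!\!\int_{x}^{\infty}\!\psi(s)\,ds+\frac{\pi i}{x^2}.
\]
Since $\int_{x}^{\infty}\psi=1/x^2$ for $x>2$ (from $\psi(s)=2/|s|^3$), the imaginary parts cancel and $\widetilde\phi(x)$ is real. To get the sign, I would differentiate using the explicit formula \eqref{g-prime}:
\[
\widetilde\phi'(x)=-g'_+(x)-\frac{2\pi i}{x^3}=-\frac{4}{x^3}\log\frac{x+\sqrt{x^2-4}}{2}-\frac{\sqrt{x^2-4}}{x^2}<0\qquad(x>2),
\]
and combine with $\widetilde\phi(2)=0$. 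The statement \eqref{phi-hat-0} for $\widehat\phi$ on $(-\infty,-2)$ goes through by exactly the same argument, using $\int_{x}^{\infty}\psi=1-1/x^2$ on that interval and the mirror formula for $g'$.

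Finally, for \eqref{phi-0}, the point is that on the band $(-2,2)$ one has $g_+(x)+g_-(x)=l$, hence $\phi_+(x)+\phi_-(x)=0$, and combined with the Schwarz symmetry $\overline{\phi(\bar z)}=\phi(z)$ this forces $\Re\phi_\pm(x)=0$. A first-order Taylor expansion then gives
\[
\Re\phi(x\pm i\varepsilon)=\mp\varepsilon\,\Im\phi'_\pm(x)+O(\varepsilon^2)=-\pi\varepsilon\,\psi(x)+O(\varepsilon^2),
\]
where the last equality uses $\phi'_\pm(x)=-g'_\pm(x)=\pm\pi i\,\psi(x)$ from \eqref{g-prime-jump}. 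Since $\psi(x)>0$ on $(-2,2)$, this is strictly negative for all sufficiently small $\varepsilon>0$. The main delicate point I expect is the branch bookkeeping when evaluating $g_\pm$ at the endpoints $\pm 2$; getting the masses $1/4$ and $3/4$ on the correct sides of $\pm 2$ is what ultimately forces the constants in \eqref{phi-tilde-phi}–\eqref{phi-hat-phi} to come out clean, and a sign slip there would propagate into all subsequent statements.
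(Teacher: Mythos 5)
Your argument is correct and follows essentially the same route as the paper's: both derive \eqref{phi-tilde-phi} and \eqref{phi-hat-phi} by antidifferentiating the defining integrals and evaluating $g_\pm(\pm 2)$ from the mass computations $\int_2^\infty\psi=1/4$ and $\int_{-2}^\infty\psi=3/4$, and both obtain \eqref{phi-0} by a first-order expansion off the band using $g_\pm'(x)=\mp\pi i\,\psi(x)$ together with $\Re\phi_\pm\equiv 0$ on $(-2,2)$. The one genuine divergence is in establishing $\Re\widetilde\phi(x)<0$ for $x>2$: the paper reads this off directly as $\Re\widetilde\phi(x)=l/2-\int\log|x-s|\,\psi(s)\,ds<0$ by appealing to the variational inequality that accompanies the upper constraint on $\psi$ in the saturated region, whereas you compute from \eqref{g-prime} that $\widetilde\phi'(x)=-\frac{4}{x^3}\log\frac{x+\sqrt{x^2-4}}{2}-\frac{\sqrt{x^2-4}}{x^2}<0$ and integrate from $\widetilde\phi(2)=0$. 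Your variant is somewhat more self-contained (the paper never explicitly verifies the equilibrium inequality for this $\psi$), at the modest cost of invoking the explicit formula for $g'$, which is available in any case. Your endpoint constants $\phi(2_\pm)=\mp\pi i/4$ and $\phi(-2_\pm)=\mp 3\pi i/4$ are correct, and the mass bookkeeping at $\pm 2$ is, as you note, exactly the delicate point where a sign slip would propagate into all later statements.
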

\begin{proof}
	For $z\in \mathbb{C}_+$, we have from \eqref{phi}
	and \eqref{phi-tilde}
	\begin{equation}
\begin{aligned}	
	\widetilde \phi(z)&=\int_2^z
	\left (  -g'(s) - \frac{2\pi i}
	{s^3}  \right)ds
=-g(z)+g_+(2)+ \frac{\pi i}{z^2}
-\frac{\pi i}{4}	
=\phi(z)+\frac{\pi i}{z^2},
\end{aligned}
\end{equation}
where in the last step we have made use of the result
$
g_\pm(2)=l/2\pm \pi i \int_2^{+\infty}
\psi(s)dx=l/2\pm {\pi i}/{4}
$.

Also, it follows that for $z\in\mathbb{C}_-$
\begin{equation}
\begin{aligned}	
	\widetilde \phi(z)
=-g(z)+g_-(2)- \frac{\pi i}{z^2}
+\frac{\pi i}{4}	
=\phi(z)-\frac{\pi i}{z^2}
,
\end{aligned}	
\end{equation}
thus proving \eqref{phi-tilde-phi}.
For $z\in\mathbb{C}_\pm$, \eqref{phi-hat-phi} can be proven in a similar way.

Since by \eqref{phi-tilde-phi}
$
\Im \widetilde \phi(z) = \Im\left\{ \phi(z)
\pm {\pi i}/{z^2}\right \}
$, from \eqref{phi}  we get $\Im \widetilde \phi(x)=0$.
Also, note that
for  $x\in(2,+\infty)$ the upper constraint on the density $\psi(x)$ implies
\begin{equation}
	\Re \widetilde \phi(x)=l/2-
	\int_{-\infty}^{+\infty}
	\log |x-s| \psi(s) ds<0.
	\qquad
\end{equation}
Similarly, we can prove \eqref{phi-hat-0}.

On account of \eqref{g-prime-jump},
we obtain for $x\in(-2,2)$
   \begin{equation}
	\Re \phi(x\pm i \varepsilon)
	=
	-\Re \int_2^x g_\pm'(s) ds
	-\Re \int_x^{x\pm i\varepsilon}
	g'(s)ds
	=-\Re \int_x^{x\pm i\varepsilon}
	g'(s)ds
	=-\pi \varepsilon \psi(x)
	+O(\varepsilon^2)<0.
\end{equation}
\end{proof}



We conclude this section with a calculation of $l$ given in \eqref{phi}. First we note that $\widetilde \phi(z)$
is analytic in $\mathbb{C}\setminus (-\infty,2]$.
Coupling \eqref{g-prime} with \eqref{phi-tilde} gives
\begin{equation}\label{phi-tilde-exact}
   \widetilde  \phi(z)
   =   \frac{2}{z^2} \log(z+\sqrt{z^2-4})
   - \log(z+\sqrt{z^2-4})+
   (1-\frac{2}{z^2}) \log 2
   +\frac{\sqrt{z^2-4}}{2 z}
\end{equation}
for $z\in\mathbb{C}\setminus(-\infty,2]$.
In view of \eqref{phi-tilde}, we have
\begin{equation}\label{t39}
 l/2 - g(z) - \widetilde \phi(z) \pm \frac{\pi i}{z^2}=0
 \qquad \mbox{for } z\in \mathbb{C}_\pm.
\end{equation}
Now let $z\to\infty$; on account of \eqref{g}, \eqref{phi-tilde-exact} and \eqref{t39}, we obtain
\begin{equation}
 l = \lim_{z\to\infty} 2(\log(z) + \widetilde \phi(z)) =1.
 \label{l}
\end{equation}

\section{The transformation $Q\to V$   and the parametrix for the outside region}
Using the functions introduced  in Section 4, we take the third transformation $Q\to V$
\begin{equation}\label{V}
	V(z):=
	n^{(\alpha-1/2)\sigma_3}
	Q(z)
	E(z)^{\sigma_3}
	\times
	\begin{cases}
	\vspace{.1cm}
	\widehat D(z) ^{\sigma_3},
	&  z\in \Omega_{l,\pm}\cup \Omega_{l,\pm}^\triangle,
	\\
	\vspace{.1cm}
	\widetilde D(z) ^{\sigma_3},
	&  z\in \Omega_{r,\pm}\cup \Omega_{r,\pm}^\triangle,
	\\
	\vspace{.1cm}
	 D(z)^{\sigma_3},
	&  z\in \Omega_{\infty};
\end{cases}
\end{equation}
see   \cite{LinWong} and \cite{WangWong} for similar transformations, and see  Figure \ref{figure1}  for the regions.

\begin{prop}

The matrix-valued function $V(z)$ has the following jumps on the contour $\Sigma$:
\begin{equation}
	V_+(z)=V_-(z) J_V(z),
\end{equation}	
where
\begin{equation}\label{V-jump-l}
	J_V(z)=
	\begin{cases}
\vspace{.1cm}	
	\begin{pmatrix}
	0  &   -{w_0(z)}/{
	\widehat D^2 (z)}
	\\
	\widehat D^2(z)/w_0(z)
	&  0
\end{pmatrix},  &  z\in(-2,0),
\\
\vspace{.1cm}
	\begin{pmatrix}
	1  &    0
	\\
	-\frac{\widehat D^2(z)}{\widehat w(z)}
	e^{2n\widehat \phi(z)}
	&  1
\end{pmatrix},  &  z\in  (-k_n,-2),
\\
\vspace{.1cm}
      \begin{pmatrix}
	\frac{D(z)}{\widehat D(z)}
	&   -\frac{\widehat w(z)}
	{\widehat D^2(z)} e^{-2n
	\widehat \phi(z)}
	\\
	-\frac{D(z) \widehat D(z)}
	{w_0(z)} e^{2n\phi(z)}
	&  1
\end{pmatrix},
    &  z\in \Sigma_{l,+},
\\
\vspace{.1cm}
      \begin{pmatrix}
	1
	&   -\frac{\widehat w(z)}
	{\widehat D^2(z)} e^{-2n
	\widehat \phi(z)}
	\\
	-\frac{D(z) \widehat D(z)}
	{w_0(z)} e^{2n\phi(z)}
	&  \frac{D(z)}{\widehat D(z)}
\end{pmatrix},
    &  z\in \Sigma_{l,-},
\\
\vspace{.1cm}
   \begin{pmatrix}
	1 &   \frac{\widehat w(z)}{  \widehat D^2(z) }
	  e^{-2n\widehat \phi(z)}
	\\
	0 &  1
\end{pmatrix},  &  z\in -2 \pm i(0,\delta),
\\
\vspace{.1cm}
	\begin{pmatrix}
	1  &  0 \\
	-\frac{D(z) \widehat D(z)}
	{w_0(z)}e^{2n \phi (z)}
	&  1
\end{pmatrix},  &  z\in \Sigma_{l,\pm}^\triangle,
\end{cases}
\end{equation}
and
\begin{equation}\label{V-jump-r}
	J_V(z)=
	\begin{cases}
\vspace{.1cm}	
	\begin{pmatrix}
	0  &   {w_0(z)}/{
	\widetilde D^2 (z)}
	\\
	-\widetilde D^2(z)/w_0(z)
	&  0
\end{pmatrix},  &  z\in(0,2),
\\
\vspace{.1cm}
	\begin{pmatrix}
	1  &    0
	\\
	\frac{\widetilde D^2(z)}{\widetilde w(z)}
	e^{2n\widetilde \phi(z)}
	&  1
\end{pmatrix},  &  x\in  (2,k_n),
\\
\vspace{.1cm}
      \begin{pmatrix}
	\frac{D(z)}{\widetilde D(z)}
	&   \frac{\widetilde w(z)}
	{\widetilde D^2(z)} e^{-2n
	\widetilde \phi(z)}
	\\
	\frac{D(z) \widetilde D(z)}
	{w_0(z)} e^{2n\phi(z)}
	&  1
\end{pmatrix},
    &  z\in \Sigma_{r,+},
\\
\vspace{.1cm}
      \begin{pmatrix}
	1
	&   \frac{\widetilde w(z)}
	{\widetilde D^2(z)} e^{-2n
	\widetilde \phi(z)}
	\\
	\frac{D(z) \widetilde D(z)}
	{w_0(z)} e^{2n\phi(z)}
	&  \frac{D(z)}{\widetilde D(z)}
\end{pmatrix},
    &  z\in \Sigma_{r,-},
\\
\vspace{.1cm}
   \begin{pmatrix}
	1 &   \frac{\widetilde w(z)}{  \widetilde D^2(z) }
	  e^{-2n\widetilde \phi(z)}
	\\
	0 &  1
\end{pmatrix},  &  z\in 2 \pm i(0,\delta),
\\
\vspace{.1cm}
	\begin{pmatrix}
	1  &  0 \\
	\frac{D(z) \widetilde D(z)}
	{w_0(z)}e^{2n \phi (z)}
	&  1
\end{pmatrix},  &  z\in \Sigma_{r,\pm}^\triangle.
\end{cases}
\end{equation}
\end{prop}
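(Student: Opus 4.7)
The plan is to compute $J_V(z) = V_-(z)^{-1} V_+(z)$ on each piece of $\Sigma$ by unwinding the definition \eqref{V} of $V$ in terms of $Q$, then using the definition \eqref{Q} of $Q$ together with the jump matrix $J_T(z)$ listed in \eqref{J_T}. The scalar prefactor $n^{(\alpha-1/2)\sigma_3}$ does not enter since it is constant. On a given segment one reads off which of $D$, $\widetilde D$, $\widehat D$ applies on either side, as well as which of $\mathcal V_\pm$ or $\mathcal V_\pm^\triangle$ (if any) dresses $T$ there, and the calculation becomes a direct conjugation by scalar diagonal factors.

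The cases split naturally. On the real segments $(-k_n,-2)$, $(-2,0)$, $(0,2)$ and $(2,k_n)$, both sides of $\Sigma$ lie in the same region of $\Omega$, so the same $D$-function (either $\widehat D$ on the left or $\widetilde D$ on the right) is used on both sides, and the jump of $V$ comes entirely from the jump of $T$ up to conjugation by the lens triangular factors. On $(0,2)$ (and symmetrically on $(-2,0)$) this is where I would use the factorization \eqref{T-fact}: the two outer triangular factors are exactly absorbed by $\mathcal V_\pm$ in \eqref{V-pm}, leaving the anti-diagonal middle factor which, after conjugation by $E^{\sigma_3}$ and $\widetilde D^{\sigma_3}$ (or $\widehat D^{\sigma_3}$), yields the stated off-diagonal matrix involving $w_0(z)$. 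On $(2,k_n)$ and $(-k_n,-2)$ one uses the lower-triangular structure of $J_T$ together with the key identity $\widetilde D(z) = D(z)(1-e^{\mp 2i\theta(z)})$ and its analogue \eqref{D-e-2} for $\widehat D$; the factors $e^{\pm 2i(\theta+\pi\alpha)}$ on the diagonal of $J_T$ are cancelled by the jumps of $\mathcal V_\pm^\triangle$ and of $\widetilde D$, producing the pure lower-triangular jump with $\widetilde w(z) e^{2n\widetilde\phi(z)}$ via \eqref{phi-tilde-phi}.

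The slightly more involved cases are the lens contours $\Sigma_{l,\pm}$, $\Sigma_{r,\pm}$ and their triangle analogues $\Sigma_{l,\pm}^\triangle$, $\Sigma_{r,\pm}^\triangle$. Across each of these, one side lies in $\Omega_\infty$ (conjugation by $D^{\sigma_3}$) and the other in a lens or saturated-lens region (conjugation by $\widehat D^{\sigma_3}$ or $\widetilde D^{\sigma_3}$), so the mismatch produces a diagonal factor $D/\widetilde D$ or $D/\widehat D$; combined with the triangular lens factors $\mathcal V_\pm$ or $\mathcal V_\pm^\triangle$, these give the full matrices in \eqref{V-jump-l} and \eqref{V-jump-r}, with the exponent $2n\phi$ in the $(2,1)$ entry coming directly from \eqref{V-pm}. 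Finally, on the short vertical segments $\pm 2 \pm i(0,\delta)$, there is no jump of $Q$, so the jump of $V$ comes purely from the branch cut of $E^{\sigma_3}$ and of the corresponding $D$-function; applying \eqref{D-e-1}, \eqref{D-e-2} and the connection formulas \eqref{phi-tilde-phi}, \eqref{phi-hat-phi} produces the upper-triangular entry $\widehat w\, e^{-2n\widehat\phi}/\widehat D^2$, and symmetrically at the right edge.

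The main obstacle is bookkeeping rather than any new idea: one must track signs and branch choices across six types of contours simultaneously (the bulk lens, the saturated lens near the edges, the vertical pieces at $\pm 2$, and the real axis on either side of the turning points), and verify that everything collapses to the formulas stated in \eqref{V-jump-l}--\eqref{V-jump-r}. None of the individual computations is subtle, but a single misplaced sign in \eqref{D-e-1} or in \eqref{phi-tilde-phi} corrupts several jump matrices at once. I would therefore organize the verification as a table, one row per segment, listing the regions on either side, the applicable $\mathcal V$-matrix, and the ratio of $D$-functions, and then check the jump row by row.
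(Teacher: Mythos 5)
Your overall strategy is exactly right: unwind $V\to Q\to T$, compute $J_V = V_-^{-1}V_+$ segment by segment, absorb the outer triangular factors of \eqref{T-fact} with $\mathcal V_\pm$ on $(-2,0)\cup(0,2)$, use \eqref{D-e-1}, \eqref{D-e-2} and \eqref{phi-tilde-phi}, \eqref{phi-hat-phi} on the saturated segments and across the lens contours, and track the $D/\widetilde D$ (resp.\ $D/\widehat D$) mismatch on $\Sigma_{r,\pm}$ (resp.\ $\Sigma_{l,\pm}$). The paper simply states the proposition without proof, and this is the computation it implicitly demands.

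There is, however, a genuine error in your description of what happens on the short vertical segments $\pm 2\pm i(0,\delta)$. You assert that $Q$ is continuous there and that the jump of $V$ "comes purely from the branch cut of $E^{\sigma_3}$ and of the corresponding $D$-function." Both halves of this are false. First, $R$ (hence $T$, hence $Q$) \emph{does} jump on these segments: the relevant $J_R$ is written out explicitly in the paper, and the two sides of, say, $2+i(0,\delta)$ carry different $\mathcal U$-dressings ($\Omega_{r,+}$ versus $\Omega_{r,+}^\triangle$). After conjugating $J_T$ by $\mathcal V_-^{-1}$ on one side and $\mathcal V_+^\triangle$ on the other, the jump of $Q$ reduces to the nontrivial upper-triangular matrix $\begin{pmatrix}1 & 2i\gamma(z) w(z) e^{-2i\theta(z)} e^{-2n\phi(z)}\\ 0 & 1\end{pmatrix}$. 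Second, on $2+i(0,\delta)$ neither $E$ nor $\widetilde D$ has a branch cut: $E$ is analytic off $(-\infty,-2]\cup[2,\infty)$ and $\widetilde D$ is analytic off $(-\infty,0]$, so both are continuous across the vertical segment. The stated entry $\widetilde w\, e^{-2n\widetilde\phi}/\widetilde D^2$ arises not from a branch mismatch of the outer dressings but from rewriting $2i\gamma w e^{-2i\theta-2n\phi}/E^2$ using \eqref{phi-tilde-phi} (to convert $e^{-2n\phi-2i\theta}$ into $e^{-2n\widetilde\phi}$ up to a constant phase) together with the branch relation $\widetilde E^2 = -e^{\mp 2\pi i\alpha}E^2$ valid in $\mathbb C_\pm$ near $z=2$. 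If you execute your plan row by row as proposed, this is the segment where it would appear not to work, and you would need to reverse your attribution of where the jump comes from.
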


It is readily seen that $V(z)$ satisfies the following RHP:
\begin{description}
  \item[($V_1$)]     $V(z)$ is analytic for
  $z\in\mathbb{C}\setminus \Sigma$;

  \item[($V_2$)]
  for $z\in\Sigma$, $V(z)$ satisfies
\[
    V_+(z)=V_-(z) J_V(z);
\]
\item[($V_3$)]    as $z\to \infty$,
  \begin{equation}\label{V-inf}
  V(z)=I+O\left (1/z\right);
\end{equation}
\item[($V_4$)]
as $z\to 0$,
$V(z)$ has the behavior
     \begin{equation}
V(z)=O(\log|z|);
                             \end{equation}
\item[($V_5$)]
as $z\to \pm2$,
$V(z)$ has the behavior
     \begin{equation}
V(z)=O\left(1\right)(z^2-4)^{(1/2-\alpha)\sigma_3}.
                             \end{equation}

\end{description}

From Proposition \ref{pro-phi-tilde}, we  can choose $\delta$ sufficiently small so that
\begin{equation}
 \Re \phi(z) <0,
 \quad
 \Re \widetilde \phi(z)>0
 \quad
 \mbox{ and }
 \quad
 \Re \widehat \phi(z)>0
\end{equation}
in both the upper and lower lens regions. These together with \eqref{phi-hat-0}-\eqref{phi-0}, \eqref{D-e-1} and \eqref{D-e-2},
imply that all jumps on the contour $\Sigma$ are exponentially close to the identity matrix, provided that they are bounded away from the segment $(-2,2)$.
Moreover,  for $x\in(-2,2)$, the functions
 $-w_0(x)/\widehat D^2(x)$
and $w_0(x)/\widetilde D^2(x)$ are approximated by
$
	-(4-x^2)^{2\alpha -1}
	e^{L}
$
as $n\to \infty$, where $L=\log\left(
\sqrt 2 i \Gamma^2(\alpha) e^\alpha /\sqrt \pi
\right)$.
It is therefore natural to  expect  that for large $n$, the solution of the RHP for $V(z)$ may behave asymptotically like the solution of the following RHP for $N(z)$:
\begin{description}
  \item[($N_1$)] $N(z)$ is analytic in $\mathbb{C}\backslash [-2,2]$;
\item[($N_2$)]    for $x\in(-2,2)$,
\begin{equation}
\begin{array}{llll}
 N_+(x)=N_-(x) \left(
                               \begin{array}{cc}
                                 0& -(4-x^2)^{2\alpha-1}e^{L} \\
                                 (4-x^2)^{1-2\alpha}e^{-L} & 0 \\
                                 \end{array}
                             \right);
                            \\
\end{array}
\end{equation}
\item[($N_3$)]    as $z\to\infty$,
  \begin{equation}\label{N-inf}
  N(z)=I+O(1/z).
  \end{equation}
\end{description}

This problem can be solved explicitly, and its solution is given by
\begin{equation}
  N(z)=\left(\begin{array}{cc}
  (z^2-4)^{1/4-\alpha} \varphi(z/2)^{2\alpha-1/2} & -ie^L(z^2-4)^{\alpha-3/4} \varphi(z/2)^{1/2-2\alpha}\\
  ie^{-L}(z^2-4)^{1/4-\alpha}
  \varphi(z/2)^{2\alpha-3/2} &
  (z^2-4)^{\alpha-3/4}
  \varphi(z/2)^{3/2-2\alpha}
  \end{array}\right),
\label{N}
\end{equation}
where $\arg (z\pm 2)\in (-\pi, \pi)$, and 
\begin{equation}\label{varphi}
 \varphi(z):={z+\sqrt{z^2-1} }
\end{equation}
with a branch cut along $[-1, 1]$ and $\varphi(z)\to 2z$ as $z\to\infty$; see also \cite{Arno}.

\section{Local parametrices      and the final transformation $V\to S$}

In this section, we will consider local parametrices in small disks $U(0,\varepsilon)$,
$U(2,\varepsilon)$ and $U(-2,\varepsilon)$,  respectively,  centered at the origin and at the end points of bands which are adjacent to a saturated region.

\subsection{Parametrix at endpoints of the saturated-band region}

We seek a local parametrix $V_{loc}(z)$ defined on $U(2,\varepsilon)\cup U(-2,\varepsilon)$ such that
\begin{description}
  \item[($V_{loc,1}$)] $V_{loc}(z)$ is analytic in $U(2,\varepsilon)\cup U(-2,\varepsilon) \backslash \Sigma$;
\item[($V_{loc,2}$)]    for $z\in (U(2,\varepsilon)\cup U(-2,\varepsilon)) \cap \Sigma$,
\begin{equation}\label{V-2-jump}
 (V_{loc})_+(z)=(V_{loc})_-(z)
 J
 _V(z);
\end{equation}

\item[($V_{loc,3}$)]  for $z\in \partial U(2,\varepsilon)\cup \partial U(-2,\varepsilon)$,
  \begin{equation}
 V_{loc}(z)N^{-1}(z)=I+O\begin{pmatrix}
                              1/n
                                 \end{pmatrix}
 \qquad \mbox{as } n\to \infty;
  \end{equation}
  
 \item[($V_{loc,4}$)]
as $z\to \pm2$,
$V_{loc}(z)$ has the behavior
     \begin{equation}
V(z)=O\left(1\right)(z^2-4)^{(1/2-\alpha)\sigma_3}.
                             \end{equation} 
\end{description}

At first, we  construct the parametrix near the point $z=2$. The jumps $J_V(z)$ are given by
\begin{equation}
J_V(z)=
\left\{\begin{array}{llll}
  \left(
                               \begin{array}{cc}
                                 0&  w_0/\widetilde D^2 \\
                                 -\widetilde D^2/ w_0 & 0 \\
                                 \end{array}
                             \right), & z\in(2-\varepsilon,2),\\
 \left(
                              \begin{array}{cc}
                                 1& \widetilde w /(\widetilde D^2 e^{2n\widetilde \phi}) \\
                                 0 & 1 \\
                                 \end{array}
                             \right), & z\in(2,2+i\varepsilon),\\
 \left(
                                \begin{array}{cc}
                                 1& \widetilde w /(\widetilde D^2 e^{2n\widetilde \phi}) \\
                                 0 & 1 \\
                                 \end{array}
                             \right), & z\in(2,2-i\varepsilon),\\
 \left(
                               \begin{array}{cc}
                                 1& 0 \\
                                 \widetilde D^2 e^{2n\widetilde \phi}/\widetilde w & 1 \\
                                 \end{array}
                             \right), & z\in(2,2+\varepsilon).
\end{array}\right.
\end{equation}
If we set
\begin{equation}\label{V-2-tilde}
	P(z)
	=
	V_{loc}(z)
	\times
	\begin{cases}
	(\widetilde w^{1/2}/ \widetilde D)^{\sigma_3}
	e^{-n\widetilde \phi(z)\sigma_3}
	&
	\mbox{for } z\in U(2,\varepsilon),
	\\
	((-\widehat w)^{1/2}/
	\widehat D)^{\sigma_3}
	e^{-n\widehat \phi(z)\sigma_3}
	&
	\mbox{for } z\in U(-2,\varepsilon),
	\end{cases}
\end{equation}
then the jump conditions on $P(z)$ become
\begin{equation}
	P_+(z) =P_-(z)
	J_{P}(z),	
\end{equation}
where for $z\in U(2,\varepsilon)$
\begin{equation}
	J_{P}(z)
	=
	\begin{cases}
	\begin{pmatrix}
	0  &  -1  \\
	1  &  0
\end{pmatrix},
    	&  z\in(2-\varepsilon,2),
    	\\
    	\begin{pmatrix}
	1  &  1  \\
	0  &  1
\end{pmatrix},
    	&  z\in(2,2+i\varepsilon),
    	\\
    	   	\begin{pmatrix}
	1  &  1  \\
	0  &  1
\end{pmatrix},
    	&  z\in(2,2-i\varepsilon),
    	\\
    	   	\begin{pmatrix}
	1  &  0  \\
	1  &  1
\end{pmatrix},
    	&  z\in(2,2+\varepsilon);
\end{cases}
\end{equation}
see Figure \ref{figure-2}.

\begin{figure}
\centering
\begin{tikzpicture}
\def\rad{1.8cm}
  \path [thick,draw=black,postaction={on each segment={mid arrow=black}}]
  (0,1.2*\rad)--(0,0)
  (0,-1.2*\rad)--(0,0)
  (-1.2*\rad,0)--(0,0)
  (0,0)--(1.2*\rad,0)
  ;

\filldraw  (0,0) circle (1pt) node [below right] {$2$};

\node [above] at (0,1.1*\rad) {$
 \begin{pmatrix}
	1  & 1 \\
	0 & 1
\end{pmatrix}
$};

\node [left] at (-1.2*\rad,0) {$
 \begin{pmatrix}
	0  & -1 \\
	1 & 0
\end{pmatrix}
$}
 ;

\node [right] at (1.2*\rad,0) {$
 \begin{pmatrix}
	1  & 0 \\
	1 & 1
\end{pmatrix}
$}
 ;

 \node [below] at (0,-1.1*\rad) {$
 \begin{pmatrix}
	1  & 1 \\
	0 & 1
\end{pmatrix}
$}
 ;

\end{tikzpicture}

\caption{The contour near $z=2$.}
\label{figure-2}
\end{figure}

The jumps suggest that $P(z)$ can be constructed  by using Airy functions.  In view of the relations $\Ai(z)+\omega \Ai(\omega z)+\omega^2\Ai(\omega^2z)=0$ and $\Ai'(z)+\omega^2 \Ai'(\omega z)+\omega \Ai'(\omega^2z)=0$, we define
\begin{equation}
\Psi(z)
:=
\left\{
\begin{array}{llll}
\left(\begin{array}{cc}
\Ai(z) & \omega^2 \Ai(\omega^2z)\\
i\Ai'(z) & i\omega \Ai'(\omega^2z)
\end{array}\right) & \mbox{for} ~\arg z\in(0,\pi/2),\\
\\
\left(\begin{array}{cc}
-\omega \Ai(\omega z) & \omega^2 \Ai(\omega^2 z)\\
-i\omega^2 \Ai'(\omega z) & i\omega \Ai'(\omega^2 z)
\end{array}\right) & \mbox{for} ~\arg z\in(\pi/2,\pi),\\
\\
\left(\begin{array}{cc}
-\omega^2 \Ai(\omega^2z) & -\omega \Ai(\omega z)\\
-i\omega \Ai'(\omega^2z) & -i\omega^2 \Ai'(\omega z)
\end{array}\right) & \mbox{for} ~\arg z\in(-\pi,-\pi/2),\\
\\
\left(\begin{array}{cc}
\Ai(z) & -\omega \Ai(\omega z)\\
i \Ai'(z) & -i\omega^2 \Ai'(\omega z)
\end{array}\right) & \mbox{for} ~\arg z\in(-\pi/2,0),\\
\end{array}\right.
\label{Psi function}
\end{equation}
where $\omega=e^{2\pi i/3}$, and it satisfies the following jumps
\begin{equation}
\Psi_+(z)=\Psi_-(z)\left\{\begin{array}{llll}
 \left(
                               \begin{array}{cc}
                                 0& -1 \\
                                 1 & 0 \\
                                 \end{array}
                             \right) & \mbox{for} ~z\in(-\infty,0),\\
 \left(
                                \begin{array}{cc}
                                 1& 0 \\
                                 -1 & 1 \\
                                 \end{array}
                             \right) & \mbox{for} ~z\in(\infty e^{-\pi i/2},0)\cup (\infty e^{\pi i/2},0),\\
 \left(
                               \begin{array}{cc}
                                 1& -1 \\
                                 0 & 1 \\
                                 \end{array}
                             \right) & \mbox{for} ~z\in(0,\infty).
\end{array}\right.
\end{equation}

To construct our parametrix, we recall the function
\begin{equation}
	\widetilde f_n(z) =
	\left(
	  -\frac32 n \widetilde \phi (z)
	 \right)^{2/3};
\end{equation}
see \eqref{f-tilde}.
Since by \eqref{phi-tilde-exact}
\begin{equation}
  \widetilde \phi(z) =-\frac{2}{3}  (z-2)^{3/2}
  +O( (z-2)^2 ),
\end{equation}
it is readily seen that $\widetilde f_n(z)$ is a one-to-one mapping of $U(2,\varepsilon)$ onto a neighborhood of the origin.
For $ z\in U(2,\varepsilon)$,
comparing the RHP for $\Psi$ with the RHP for $P$ invokes us to seek a solution to the RHP for $V
_{loc}$ in the form of
\begin{equation}\label{V-2}
 V_{r}(z)=
  E_r(z)\sigma_2\Psi(\widetilde f_n(z))\sigma_2e^{n\widetilde\phi\sigma_3}
 (\widetilde D/
 \widetilde w^{1/2}
 )^{\sigma_3},
\end{equation}
where the Pauli matrix $\sigma_2=
\begin{pmatrix}
	0 & -i\\
	i  &  0
\end{pmatrix}
$
and the matrix-valued function $E_r(z)$ is analytic in $U(2,\varepsilon)$, to be  determined later.
Note that
\begin{equation}\label{par-1}
\sigma_2\Psi(\widetilde f_n(z))\sigma_2e^{n\widetilde\phi\sigma_3}=\frac 1{\sqrt{\pi}} (\widetilde f_n(z))^{\sigma_3/4}
\begin{pmatrix}
1 & -i\\
-i & 1
\end{pmatrix}
^{-1}
\left(I+O\left( 1/{n}\right)\right)
\end{equation}
and
\begin{equation}\label{par-2}
\begin{aligned}
{(\widetilde D/\widetilde w^{1/2})}^{\sigma_3}N(z){(\widetilde D/\widetilde w^{1/2})}^{-\sigma_3}
=\frac{(z^2-4)^{(1/2-\alpha)\sigma_3}}{(z^2-4)^{1/4}}
\left(\begin{array}{cc}
   \varphi(z/2)^{2\alpha-1/2} & -i\varphi(z/2)^{1/2-2\alpha}\\
  i\varphi(z/2)^{2\alpha-3/2} &
  \varphi(z/2)^{3/2-2\alpha}
   \end{array}\right)
   (I+O(1/n))
   ,
\end{aligned}
\end{equation}
which  holds uniformly  for  $z\in \partial U(2,\varepsilon)$ as $n\rightarrow\infty$.
Thus, the condition $V_{loc,3}$
with \eqref{par-1} and \eqref{par-2} leads us to set
\begin{equation}
E_r(z)=\sqrt{\pi}
(\widetilde D/\widetilde w^{1/2})^{-\sigma_3}
m_r(z) (\widetilde f_n(z))^{-\sigma_3/4},
\label{E-r}
\end{equation}
where
\begin{equation}
m_r(z)=\frac{(z^2-4)^{(1/2-\alpha)\sigma_3}}{(z^2-4)^{1/4}}\left(\begin{array}{cc}
     \varphi(z/2)^{2\alpha-1/2} & -i\varphi(z/2)^{1/2-2\alpha}\\
  i\varphi(z/2)^{2\alpha-3/2} &
  \varphi(z/2)^{3/2-2\alpha},
  \end{array}\right)
  \left(\begin{array}{cc}
  1 & -i\\
  -i & 1
  \end{array}\right).
\label{m-r}
\end{equation}
It can be shown that ${E}_r(z)$ has no jumps across the  interval $(0,2)$,
and in addition that it has a removable singularity at $z=2$.

\begin{figure}
\centering
\begin{tikzpicture}
\def\rad{1.8cm}
  \path [thick,draw=black,postaction={on each segment={mid arrow=black}}]
  (0,1.2*\rad)--(0,0)
  (0,-1.2*\rad)--(0,0)
  (-1.2*\rad,0)--(0,0)
  (0,0)--(1.2*\rad,0)
  ;

\filldraw  (0,0) circle (1pt) node [below right] {$-2$};

\node [above] at (0,1.1*\rad) {$
 \begin{pmatrix}
	1  & -1 \\
	0 & 1
\end{pmatrix}
$};

\node [left] at (-1.2*\rad,0) {$
 \begin{pmatrix}
	1  & 0 \\
	1 & 1
\end{pmatrix}
$}
 ;

\node [right] at (1.2*\rad,0) {$
 \begin{pmatrix}
	0  & -1 \\
	1 & 0
\end{pmatrix}
$}
 ;

 \node [below] at (0,-1.1*\rad) {$
 \begin{pmatrix}
	1  & -1 \\
	0 & 1
\end{pmatrix}
$}
 ;

\end{tikzpicture}

\caption{The contour near $z=-2$.}
\label{figure--2}
\end{figure}

A similar construction gives the parametrix at the point $z=-2$. Namely, if
we set
\begin{equation}
	\widehat f_n(z) =
	\left(
	  -\frac32 n \widehat \phi (z)
	 \right)^{2/3},
\end{equation}
which is analytic in $U(-2,\varepsilon)$, and  is a one-to-one mapping of $U(-2,\varepsilon)$ onto the neighborhood of the origin. For $z\in U(-2,\varepsilon)$, the jumps $J_P(z)$  are given by
\begin{equation}
	J_{P}(z)
	=
	\begin{cases}
	\begin{pmatrix}
	0  &  -1  \\
	1  &  0
\end{pmatrix},
    	&  z\in(-2,-2+\varepsilon),
    	\\
    	\begin{pmatrix}
	1  &  -1  \\
	0  &  1
\end{pmatrix},
    	&  z\in(-2,-2+i\varepsilon),
    	\\
    	   	\begin{pmatrix}
	1  &  -1  \\
	0  &  1
\end{pmatrix},
    	&  z\in(-2,-2-i\varepsilon),
    	\\
    	   	\begin{pmatrix}
	1  &  0  \\
	1  &  1
\end{pmatrix},
    	&  z\in(-2-\varepsilon,-2);
\end{cases}
\end{equation}
see Figure \ref{figure--2}. These jump conditions are satisfied by the function
$
	\sigma_1\Psi(-z) \sigma_1
$,
where the pauli matrix $\sigma_1=
 \begin{pmatrix}
	0  &  1\\
	1  &  0
\end{pmatrix}
$. Hence, we can take for $z\in U(-2,\varepsilon)$
\begin{equation}\label{V-l}
	 V_{l}(z)=
  E_l(z)\sigma_1\Psi(\widehat f_n(z))\sigma_1
  e^{n\widehat \phi\sigma_3}
 (\widehat D/
 (-\widehat w)^{1/2}
 )^{\sigma_3},
\end{equation}
where
\begin{equation}
E_l(z)=\sqrt{\pi}
(\widehat D/(-\widehat w)^{1/2})^{-\sigma_3}
m_l(z) (\widehat f_n(z))^{-\sigma_3/4}
\label{E-l}
\end{equation}
and
\begin{equation}
{m}_l(z)=\frac{[(-2-z)(2-z)]^{(1/2-\alpha)\sigma_3}}{(2-z)^{1/4}(-2-z)^{1/4}}\left(\begin{array}{cc}
    \varphi(-z/2)^{2\alpha-1/2} & i \varphi(-z/2)^{1/2-2\alpha}\\
  -i \varphi(-z/2)^{2\alpha-3/2} &
  \varphi(-z/2)^{3/2-2\alpha}
  \end{array}\right)
  \left(\begin{array}{cc}
  1 & i\\
  i & 1
  \end{array}\right).
\label{m-l}
\end{equation}

\subsection{Parametrix at the point $z=0$}

As $z\to 0$ for $z\in \Sigma$, $J_V(z)-I$ is no longer small.  Hence we are led to the following parametrix for $V_0(z)$ at the origin:
\begin{description}
\item[($V_{0,1}$)] $V_0(z)$ is analytic in $U(0,\varepsilon)\backslash \Sigma$;
\item[($V_{0,2}$)]  for $z\in U(0,\varepsilon)\cap \Sigma$,  $V_0(z)$ shares the same jumps as $V(z)$; cf. \eqref{V-jump-l} and \eqref{V-jump-r},  see Figure \ref{figure-0-1}
for an illustration of the  contours;
\item[($V_{0,3}$)]  for $z\in \partial U(0,\varepsilon)$, the matching condition holds:
  \begin{equation}\label{V-0-matching}
 V_0(z)N^{-1}(z)=I+O\(1/n\)
 \qquad \mbox{as } n\to\infty.
  \end{equation}
\end{description}

Next, we proceed to find an approximating RHP, of which the solution $V_{loc}^0(z)$ is the leading order approximation of $V_0(z)$. Indeed, on account of \eqref{phi-tilde-exact} and \eqref{phi-tilde-phi},   we observe that
$e^{-n \widetilde \phi}$ is exponentially small and $D/\widetilde D$ tends to 1 exponentially and uniformly for $z\in \Sigma_{r,\pm}$, so long as $\arg z$ keeps a distance to $\pm \pi/2$. 
By simplifying the jump conditions for the RHP for $V$, we wish to find a matrix-valued function
$V_{loc}^0(z)$ such that
\begin{description}
  \item[($V_{loc,1}^0$)] $V_{loc}^0(z)$ is analytic in $U(0,\varepsilon)\backslash \Sigma$;
\item[($V_{loc,2}^0$)]    for $z\in U(0,\varepsilon)\cap \Sigma$,
\begin{equation}
 (V_{loc}^0)_+(z)=(V_{loc}^0)_-(z)J_{V_{loc}^0}(z),
\end{equation}
where
\begin{equation}
J_{V_{loc}^0}(z)=
\left\{\begin{array}{llllll}
 \left(
                              \begin{array}{cc}
                                 1& 0 \\
                                 -(4-z^2)^{1-2\alpha}e^{-L+2n\phi} & 1 \\
                                 \end{array}
                             \right) & \mbox{for} ~z\in
                           U(0,\varepsilon)\cap(    \Sigma_{l,\pm}\cup\Sigma_{r,\pm}),\\
  \left(
                               \begin{array}{cc}
                                 0 & -(4-z^2)^{2\alpha-1}e^{L} \\
                                 (4-z^2)^{1-2\alpha}e^{-L} & 0 \\
                                 \end{array}
                             \right) & \mbox{for} ~z\in(-\varepsilon,\varepsilon),
\end{array}\right.
\end{equation}with $\arg  (2-z)\in (-\pi, \pi)$ and $\arg (z+2)\in (-\pi, \pi)$;
\item[($V_{loc,3}^0$)]  for $z\in \partial U(0,\varepsilon)$,
  \begin{equation}\label{}
 V_{loc}^0(z)N^{-1}(z)=I+O\(1/n\)
 \qquad \mbox{as } n\to\infty.
  \end{equation}
\end{description}

\begin{figure}
\centering
\begin{tikzpicture}
\def\rad{1.8cm}
  \path [thick,draw=black,postaction={on each segment={mid arrow=black}}]
  (140:1.7*\rad) node [left] {$\Sigma_{l,+}$}
  --(0,0)
  (0,0)--(-40:1.7*\rad) node [right] {$\Sigma_{r,-}$}
  (-140:1.7*\rad)
  node [left] {$\Sigma_{l,-}$}
  --(0,0)
  (0,0)--(40:1.7*\rad)
  node [right] {$\Sigma_{r,+}$}
  (-1.6*\rad,0)--(0,0)
  (0,0)--(1.6*\rad,0)
  ;

\filldraw  (0,0) circle (1pt) node [below] {$0$};

\draw [dashed] (0,0) circle (1.82cm);

\node [above] at (0,.8) {I};

\node [below] at (0,-.8) {II};

\end{tikzpicture}

\caption{The contour near $z=0$.}
\label{figure-0-1}
\end{figure}

A solution is readily found as 
\begin{equation}\label{V-loc-0}
 V_{loc}^0(z):=
\left\{\begin{array}{lll}
 N(z)\left(
                              \begin{array}{cc}
                                 1 & 0 \\
                                 -(4-z^2)^{1-2\alpha}e^{-L+2n\phi} & 1 \\
                                 \end{array}
                             \right) & \mbox{for} ~z\in \mbox{I},\\
N(z) \left(
                              \begin{array}{cc}
                                 1 & 0 \\
                                 (4-z^2)^{1-2\alpha}e^{-L+2n\phi} & 1 \\
                                 \end{array}
                             \right) & \mbox{for} ~z\in \mbox{II},\\
N(z) & \mbox{for} ~z\in U(0,\varepsilon)\setminus (\mbox{I}\cup \mbox{II}),
\end{array}\right.
\end{equation}
where I and II are depicted in Figure \ref{figure-0-1}.

It is worth noting that 
\begin{equation}\label{V-0-jump-error}
 J_{V_{loc}^0}(z)J_V^{-1}(z) =I+O (z^2/n),
\end{equation}uniformly for $z\in  \{ U(0,\varepsilon)\cap \Sigma\} \cup \partial U(0,\varepsilon)$. 
Following the discussion in \cite[Section 7]{DeiftZhouS}, 
from \eqref{V-0-jump-error}  we conclude that
\begin{equation}\label{V-0-V-loc}
V_0(z)=V_{loc}^0(z)\left (I+O (1/n)\right ),
\end{equation}uniformly for $z\in  U(0,\varepsilon)$.

\begin{remark}
 An alternative  derivation of \eqref{V-0-V-loc} is given as follows. Let $H(z)=V_0(z) \left ( V_{loc}^0\right )^{-1} (z)$ for $z\in \overline {U(0, \varepsilon)}\; \backslash \Sigma$. 
 It is readily seen from \eqref{V-0-jump-error} that
the jump for $H$ fulfills  $J_H(z)=I+O(1/n)$ for $z\in U(0, \varepsilon) \cap\Sigma$, and that $H(z)=I+O(1/n)$ on $\partial U(0, \varepsilon)$. 
Using the single-layer potential techniques, we seek $H(z)$ of the form
\begin{equation*}
 H(z) =I +\frac 1 {2\pi i}\int_{\Sigma_\varepsilon}\frac {\mu_H(\tau) d\tau}{\tau-z},
 \quad z \in U(0, \varepsilon)\backslash  \Sigma,
\end{equation*}where $\Sigma_\varepsilon=U(0, \varepsilon) \cap\Sigma$.
  The RHP for $H(z)$ is then reduced to the singular integral equation for the new unknown matrix function $\mu_H(\tau)$:
\begin{equation*}
    \mu_H(\tau)= \left [ -\frac 1 2\mu_H(\tau)+ \frac 1 {2\pi i}\;p.\,v. \int_{\Sigma_\varepsilon}\frac {\mu_H(t) dt}{t-\tau}\right ]\left (J_H(\tau)-I\right )+\left (J_H(\tau)-I\right ),
    \quad \tau \in \Sigma_\varepsilon.
\end{equation*}
It is seen that the operator in the square brackets are bounded operator in $L_2(\Sigma_\varepsilon)$. Hence, for large $n$, bearing in mind that $J_H(\tau)-I=O(1/n)$, we see that  the nonhomogeneous equation is contractive. A unique solution is then determined such that $\mu_H(\tau)=O(1/n)$ on $\Sigma_\varepsilon$.
Accordingly, it is readily verified that  $H(z)=I+O(1/n)$ in $\overline {U(0, \varepsilon)}$, and \eqref{V-0-V-loc} follows.
\end{remark}

It is worth mentioning that refinements to \eqref{V-0-V-loc} can be obtained by picking up the later terms in \eqref{V-0-jump-error}.

\subsection{The transformation $V\to S$}

Let
\begin{equation}\label{V-tilde}
\widetilde V(z):=\left\{\begin{array}{llll}
V_l(z), &  z\in U(-2,\varepsilon),\\
V_0(z), & z\in U(0,\varepsilon),\\
V_r(z), & z\in U(2,\varepsilon),\\
N(z), &    \mbox{otherwise}.
\end{array}\right.
\end{equation}
It is readily verified that the matrix-valued function
\begin{equation}
S(z):=V(z)\widetilde V^{-1}(z)
\label{S}
\end{equation}
is a solution of the RHP:
\begin{description}
  \item[($S_1$)] $S(z)$ is analytic in $\mathbb{C}\backslash \Sigma_S $ (see Figure \ref{figure-S});
\item[($S_2$)]    For $z\in \Sigma_S$,
\begin{equation}\label{S-jump}
 S_+(z)=S_-(z) J_S(z),
\end{equation}
where
\begin{equation}
 J_S(z)
	=\widetilde V_-(z)
	J_V(z) \widetilde V_+^{-1}(z)
	=
\left\{\begin{array}{lll}
V_l(z) N^{-1}(z), &  z\in \partial U(-2,\varepsilon),\\
V_0(z)N^{-1}(z), &  z\in \partial U(0,\varepsilon),\\
V_r(z)N^{-1}(z), &  z\in \partial U(2,\varepsilon),\\
N(z) J_V(z) N^{-1}(z), & \mbox{otherwise};
\end{array}\right.
\end{equation}
\item[($S_3$)]   
 as $z\to\infty$,
  \begin{equation}
  S(z)=I+O(1/z).
  \end{equation}
\end{description}

It follows from the matching condition of the local parametrices and the definition of
$\phi$ that
\begin{equation}
	J_S(z)=
	\begin{cases}
	I+ O(1/n),
	& z\in \partial ( U(0,\varepsilon) \cup  U(-2,\varepsilon)\cup U(2,\varepsilon))
	\cup (-2+\varepsilon,-\varepsilon) \cup (\varepsilon,2-\varepsilon),
	  \\
	  I+ O(e^{-cn}),
	& \mbox{ otherwise}
	  ,
\end{cases}
\end{equation}
where $c$ is a positive constant, and the error term is uniform for $z\in \Sigma_S$. Hence, we have
\begin{equation}\label{S-temp-1}
	\Vert
	J_S(z) -I
	 \Vert_{ L^2\cap L^\infty (\Sigma_S
	 ) }
	 =  O(1/n).
\end{equation}
Then,
applying the standard procedure of norm estimation of Cauchy operators $C$ and $C_-$ (cf. \cite{DeiftZhouU}, \cite{DeiftZhouS}), it follows that
\begin{equation}\label{S-1}
S(z)=I+C(\Delta_S+u_S\Delta_S),
\end{equation}
where
$
\Delta_S := J_S(z)-I
$
and
$
u_S:= (I-C_{\Delta_S })^{-1}(C_-\Delta_S)
$.
Furthermore,
by using the technique of deformation of contours and \eqref{S-temp-1}, we have
\begin{equation}\label{S-inf}
S(z)=I+O\left( 1/{n}\right)
\end{equation}
uniformly for $z\in\mathbb{C}$.

\begin{figure}
\centering
\begin{tikzpicture}
\def\rad{1.8cm}
  \path [thick,draw=black,postaction={on each segment={mid arrow=black}}]
  (-3*\rad,0)--(-1.7*\rad,0)
  (-1.7*\rad,0)--(0,0)
  (0,0)--(1.7*\rad,0)
  (1.7*\rad,0)--(3*\rad,0)

  (-3*\rad,0)--(-3*\rad,\rad)
  (-3*\rad,\rad)--(-1.7*\rad,\rad)
  (-1.7*\rad,\rad)--(-\rad,\rad)
  (-1.7*\rad,\rad) --(-1.7*\rad,0)
  (-\rad,\rad) --(0,0)

  (-3*\rad,0)--(-3*\rad,-\rad)
  (-3*\rad,-\rad)--(-1.7*\rad,-\rad)
  (-1.7*\rad,-\rad)--(-\rad,-\rad)
  (-1.7*\rad,-\rad)--(-1.7*\rad,0)
  (-\rad,-\rad) --(0,0)

  (3*\rad,\rad) --(3*\rad,0)
  (1.7*\rad,\rad)--(3*\rad,\rad)
  (\rad,\rad)--(1.7*\rad,\rad)
  (1.7*\rad,\rad)--(1.7*\rad,0)
  (0,0)-- (\rad,\rad)

   (3*\rad,-\rad) --(3*\rad,0)
  (1.7*\rad,-\rad)--(3*\rad,-\rad)
  (\rad,-\rad)--(1.7*\rad,-\rad)
  (1.7*\rad,-\rad)--(1.7*\rad,0)
  (0,0)--(\rad,-\rad)

  ;
 \filldraw (-3*\rad,0) circle (1pt)
 node [left] {$-k_n$};

\filldraw [white] (1.7*\rad,0) circle (.6);

\filldraw [white] (-1.7*\rad,0) circle (.6)
 node [below] {$0$};

\draw [thick] (-1.7*\rad,0) circle (.6);

\draw [thick] (1.7*\rad,0) circle (.6);

 \filldraw (3*\rad,0) circle (1pt)
 node [right] {$k_n$};

\draw [->,>= stealth,thick]  ($(-1.7*\rad,0)+(130:0.6)$)
--($(-1.7*\rad,0)+(120:0.6)$);

\draw [->,>= stealth,thick]  ($(1.7*\rad,0)+(60:0.6)$)
--($(1.7*\rad,0)+(50:0.6)$);

\node [left] at (-3*\rad,\rad) {$-k_n+i\delta$};

\node [left] at (-3*\rad,-\rad) {$-k_n-i\delta$};

\filldraw [white] (0,0) circle (.6);

\draw [thick] (0,0) circle (.6);

\draw [->,>= stealth,thick]  ($(0,0)+(90:0.6)$)--($(0,0)+(85:0.6)$);

\filldraw (0,0) circle (1pt)
node [below left] {\small $0$};

\filldraw (-1.7*\rad,0) circle (1pt)
node [below left] {\small $-2$};

\filldraw (1.7*\rad,0) circle (1pt)
node [below right] {\small $2$}
;

\end{tikzpicture}

\caption{The contour $\Sigma_S$.}
\label{figure-S}
\end{figure}
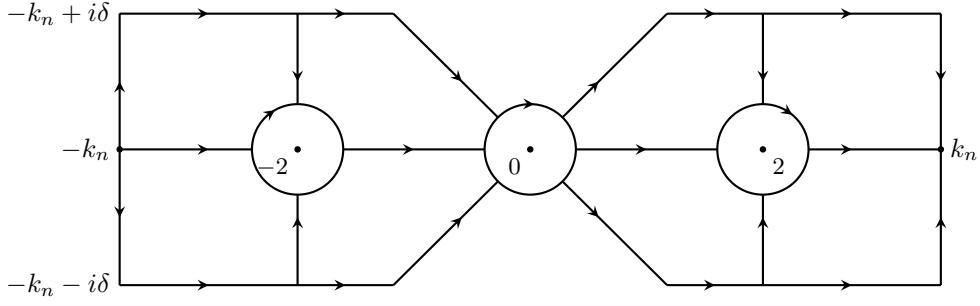

\section{Proof of Theorem \ref{theorem-main}}

Since $V(z)=S(z)\widetilde V(z)$ by \eqref{S},  it follows that
\begin{equation}\label{V-11}
	V_{11}(z)= S_{11}(z)\widetilde V_{11}(z)
	+ S_{12}(z) \widetilde V_{21}(z)
\end{equation}
and
\begin{equation}\label{V-12}
	V_{12}(z)= S_{11}(z)\widetilde V_{12}(z)
	+ S_{12}(z) \widetilde V_{22}(z).
\end{equation}
Moreover, from \eqref{S-inf} it follows that
\begin{equation}\label{S-n}
	S_{11}(z)= 1+O(1/n)
	\qquad \mbox{and}
	\qquad
	S_{12}(z)=O(1/n).
\end{equation}

\noindent\textbf{Region $A_\delta\setminus U(0, \varepsilon)$.} At first, let us consider $z\in A_\delta$  while $z\not \in U(0, \varepsilon)$.
Note by Theorem \ref{Theorem-Y} and \eqref{Y to U} that
\begin{equation}
\pi_n (n^{-1/2}z)=n^{-n/2}U_{11}(z).
\label{pi-z-U}
\end{equation}
Recalling the definition of $\widetilde V(z)$ in \eqref{V-tilde}, we have from
 \eqref{T}, \eqref{Q}, \eqref{V} and \eqref{S-n}
\begin{equation}\label{p-temp-1}
\begin{aligned}
	\pi_n(n^{-1/2}  z)
	&=e^{n/2}  n^{1/2-n/2-\alpha} V_{11}(z)
	(D(z) E(z) )^{-1}  e^{-n\phi }
	\\
	&=e^{n/2}  n^{1/2-n/2-\alpha}
	(D(z) E(z))^{-1}  e^{-n\phi }
	N_{11}
	(1+O(1/n)).
\end{aligned}
\end{equation}
A combination of this with $E(z)=\widetilde E(z)
 e^{-\pi i (1/2-\alpha)}
$,  \eqref{E-tilde} and \eqref{N}
 gives \eqref{A_delta}.

\noindent\textbf{Region $B_\delta\setminus U(0, \varepsilon)$.}
 From \eqref{V}, \eqref{Q}, \eqref{T} and \eqref{R}, it follows that
 \begin{equation}\label{p-temp-4}
 U(z)  =  e^{n/2 \sigma_3} n^{ (1/2-\alpha) \sigma_3 } V(z) [E(z) \widetilde D(z) ]^{-\sigma_3}
  [ \mathcal{V}_-(z)] ^{-1}  e^{-n \phi \sigma_3 }
   [\mathcal{U}_-(z) ]^{-1}
   \qquad \mbox{for }z\in \Omega_{r,+}.
\end{equation}
Then,  on account of \eqref{V-tilde}  we have
\begin{equation}\label{p-temp-2}
\begin{aligned}
 e^{-n/2} n^{ n/2+\alpha-1/2 }  \pi_n( n^{-1/2}z )
 &=  [  e^{-n\phi} N_{11}/(E \widetilde D)
 + e^{n\phi} E \widetilde D/(2i \gamma w
 ) N_{12} ]  S_{11}(z)
 \\
 &\quad +[  e^{-n\phi} N_{21}/(E \widetilde D)
 + e^{n\phi} E \widetilde D/(2i \gamma w
 ) N_{22} ]  S_{12}(z).
\end{aligned}
\end{equation}
From \eqref{D-tilde-asy}, we note that $\widetilde D(z)=1+O(1/n)$ for $z\in B_\delta$, which may be neglected. Hence,  we obtain \eqref{B_delta} by \eqref{E}, \eqref{N}
and \eqref{S-n}.

\noindent\textbf{Region $C_{\delta,1}\cup C_{\delta,2}$.}
Next, let us now consider $z$ in the Airy region.
For simplicity, we only consider the case $z\in C_{\delta,1}$; that is, $z\in \Sigma_{r,+}$ and $\arg \widetilde f_n(z) \in (\pi/2, \pi)$.
Again by \eqref{p-temp-4} and \eqref{V-tilde}, we note that
\begin{equation}
\begin{aligned}
 e^{-n/2} n^{ n/2+\alpha-1/2 }  \pi_n( n^{-1/2}z )
 &=  [  (V_r(z))_{11}S_{11}(z)
    +  (V_r(z))_{21}S_{12}(z)
 ] e^{-n\phi} /(E \widetilde D)
 \\
 &\quad +[  (V_r(z))_{12}S_{11}(z)
    +  (V_r(z))_{22}S_{12}(z)  ]
    e^{n\phi} E \widetilde D/(2 i \gamma w)
    .
\end{aligned}
\end{equation}
Recalling the well-known formula of the Airy functions \cite[(9.2.11)]{NIST}, one can see that
\begin{equation}\label{p-temp-3}
 \sigma_2  \Psi ( \widetilde f_n(z) )
  \sigma_2  =
  \begin{pmatrix}
   \Ai'( \widetilde f_n(z) )  &   \Bi'(\widetilde f_n(z))
   \\
   i \Ai( \widetilde f_n(z) )  &   i\Bi(\widetilde f_n(z))
  \end{pmatrix}
  \begin{pmatrix}
   -i/2 &    -i/2 \\
   1/2  &  -1/2
  \end{pmatrix}
 \qquad  \mbox{for } \arg \widetilde f_n(z) \in (\pi/2, \pi).
\end{equation}
Inserting \eqref{p-temp-3} into \eqref{V-2}, and combining  \eqref{phi-tilde-phi},
\eqref{E-tilde}, \eqref{w-tilde} and \eqref{S-n} yields \eqref{C_delta} for $z\in C_{\delta,1}$.
Following the same argument as given above, one can establish \eqref{C_delta} for $z\in C_{\delta,2}$.

\noindent\textbf{Region $D_\delta$.}
Similarly,  it can be shown that
\begin{equation}
 U(z)  =  e^{n/2 \sigma_3} n^{ (1/2-\alpha) \sigma_3 }  V(z)  [E(z) \widetilde D(z) ]^{-\sigma_3}
   [\mathcal{V}_+^\triangle (z)]^{-1}  e^{-n \phi \sigma_3 }
   [\mathcal{U}_-^\triangle (z)]^{-1}
   \qquad \mbox{for }
   z\in \Sigma_{r,+}^\triangle.
\end{equation}
Again by \eqref{V-tilde}, we have
\begin{equation}
\begin{aligned}
 e^{-n/2} n^{ n/2+\alpha-1/2 }  \pi_n( n^{-1/2}z )
 &=  [ S_{11}(z)N_{11}(z)+ S_{12}(z) N_{21}(z) ]  \frac{2 i \gamma(z) \Pi(z)}{  \widetilde D(z) E(z)}
 e^{-i \theta(z) }  e^{ -n\phi(z) }
 \\
 &\quad +[ S_{11}(z)N_{12}(z)+ S_{12}(z) N_{22}(z) ]  \frac{  \widetilde D(z) E(z)}{2 i \gamma(z) w(z)}
    e^{ n\phi(z) }.
\end{aligned}
\end{equation}
Note that $2 i \gamma(z) \Pi(z)e^{-i \theta(z) }/  \widetilde D(z) = 1/D(z)$ and
$e^{n\phi(z)}$ is exponentially small as $n\to\infty$. From \eqref{E} and \eqref{S-n}, we obtain \eqref{D_delta}.
This completes the proof of Theorem \ref{theorem-main}.

\section{Proof of Theorem \ref{Theorem-origin}}
The proof is similar to that of Theorem \ref{theorem-main}.  For instance, as $z\in A_\delta \cap U(0, \varepsilon)$, \eqref{V-11} and the first equality of \eqref{p-temp-1} are still valid. We need only to replace $V_0(z)$ with  $V_{loc}^0(z)$ up to an error of order $O(1/n)$. Then \eqref{asy-origin} follows accordingly by using the  Stirling's formula. 

The other cases, such as $z\in B_\delta \cap U(0, \varepsilon)$ can be justified similarly by tracing back all the transformations.   Again \eqref{asy-origin} follows  from a  simplification of 
\eqref{p-temp-2}. Here use has been made of the relations between the auxiliary functions, established in Section 4.

\section{Discussion and comparison with known results}
The Riemann-Hilbert approach has proven an powerful tool in dealing with uniform asymptotics. An example of its powerfulness is the uniform approximation in a neighborhood of the origin.
We given in Theorem \ref{Theorem-origin} the leading behavior. Refinements  are  achievable if we pick more terms for the matrix functions $S(z)$ and $V_0(z)$.

To conclude this paper, we will do a insistence check by comparing  our formulas in Theorem \ref{theorem-main} with the results previously obtained   in \cite{GohWimpO} and \cite{LeeWongU}. More precisely,  we check  the approximations  in regions   disjointing  the support of the equilibrium measure,  with  Goh and  Wimp \cite{GohWimpO}. Comparison will also be made with Lee and Wong  \cite{LeeWongU} on uniform asymptotics at a tuning point, corresponding to the  band-saturated region endpoint $x=2$ in the present paper.

   The first of them is taken from   \cite{GohWimpO}, obtained by using Darboux's method.
In the notations of this paper, it reads
\begin{equation}\label{GohWimp-1}
	f_n^{(\alpha)}(x)
	\sim \frac{ x^n n^{\alpha-1/x^2-1}  e^{1/x^2} }
	{ \Gamma(\alpha-1/x^2) }
\end{equation}
uniformly for $x$ in compact subset $K\subset \mathbb{C}\setminus [-1,1]$. We next derive from \eqref{D_delta} the formula for $f_n^{(\alpha)}(x)$ when $x$ is fixed (i.e., $z=O(n^{1/2})$). For $x>0$, i.e., $z>k_n$,
substituting $z=n^{1/2}x$ into \eqref{D} gives
\begin{equation}
D_+(x)=\frac{\Gamma(\alpha-1/x^2)}{\sqrt{2\pi}e^{1/x^2}}
\left(\frac{1}{x^2}\right)^{1/x^2+1/2-\alpha}e^{(1/x^2+1/2-\alpha)\pi i}.
\end{equation}
From \eqref{D_delta} and \eqref{phi-tilde-phi},
it follows that
\begin{align}
\pi_n(x)\sim\frac{\Gamma(\alpha)n^{1/2-n/2-\alpha}e^{n/2+1/x^2-n\widetilde\phi(x)}}{\Gamma(\alpha-1/x^2)(n x^2-4)^{1/4}}
\left(\frac{1}{x^2}\right)^{-1/x^2-1/2+\alpha}
\left(\frac{n^{1/2}x+\sqrt{nx^2-4}}{2}\right)^{2\alpha-1/2}.
\label{pi-compare-1}
\end{align}
Moreover, note  by \eqref{phi-tilde-exact} that
$
\widetilde\phi(n^{1/2}x)\sim  1/2-\log (n^{1/2} x)+2\log (n^{1/2}x) /(n x^2)
$ as $n\to\infty$. Thus, we have
\begin{equation}
	\pi_n(x)\sim \frac{ \Gamma(\alpha) n^{-1/x^2}e^{1/x^2}  x^n }
	{ \Gamma(\alpha -1/x^2) }.
\end{equation}
In view of \eqref{leading}, it is readily seen that \eqref{D_delta} agrees with \eqref{GohWimp-1}.

Next, we consider the case when $z=2+O(n^{-2/3})$. First, we
introduce the notation $t=(\nu/n)^{1/2}z$, where $\nu=n+2\alpha-1/2$.
From \eqref{f-tilde}, it can be shown that
\begin{equation}
\left[
\varphi(z/2)^{2\alpha-1/2}
-\varphi(z/2)^{-2\alpha+1/2}
\right](z^2-4)^{-1/4}(\widetilde f_n(z))^{-1/4}=O(\nu^{-1/6})
\end{equation}
and
\begin{equation}
\left[
\varphi(z/2)^{2\alpha-1/2}
+\varphi(z/2)^{-2\alpha+1/2}
\right](z^2-4)^{-1/4}(\widetilde f_n(z))^{1/4}=\sqrt{2}
\nu^{1/6}(1+O(\nu^{-2/3}))
.
\end{equation}
Furthermore, we have $\widetilde f_n(z)=\nu^{2/3}[\zeta(t)+O(1/\nu) ]$, where  $\zeta(t)$ is the function introduced by Lee and Wong \cite[(2.15)]{LeeWongU}. From \eqref{C_delta} and \eqref{leading}, it follows that
\begin{equation}
\begin{aligned}
f_n^{(\alpha)}(\nu^{-1/2} t)&=
n^{-1/3-n/2}e^{n/2}
\left\{\cos(\pi\alpha-\nu \pi/t^2)\left[\text{Ai}(\nu^{2/3}\zeta(t))+O\left(\nu^{-1/3}\right)\right]
\right.\\
&~~~+\left.\sin(\pi\alpha-\nu\pi/t^2)\left[\text{Bi}(\nu^{2/3}\zeta(t))+O\left({\nu^{-1/3}}\right)\right]
\right\},
\label{pi Lee}
\end{aligned}
\end{equation}
which exactly agrees with (3.7) in Lee and Wong \cite{LeeWongU} if we note that $(4\zeta(t)/(t^2-4))^{1/4}{= 1+O(\nu^{-2/3})}$.

\section*{Acknowledgements}
 The work of Shuai-Xia Xu  was supported in part by the National
Natural Science Foundation of China under grant number
11201493, GuangDong Natural Science Foundation under grant number S2012040007824, and the Fundamental Research Funds for the Central Universities under grand number 13lgpy41.
 Yu-Qiu Zhao  was supported in part by the National
Natural Science Foundation of China under grant numbers 10471154 and
10871212.

\end{document}